\documentclass{elsarticle}

\usepackage[english]{babel}
\usepackage{dsfont} 
\usepackage{graphicx}
\usepackage{color}
\usepackage[hypertexnames=false]{hyperref}
\usepackage{enumerate}
\usepackage{amssymb,empheq} 
\usepackage{amsthm} 
\usepackage{thmtools}
\usepackage{cleveref}
\usepackage{xcolor}

\allowdisplaybreaks
\sloppy

\newtheorem{thm}{Theorem}[section]
\newtheorem{lem}[thm]{Lemma}
\newtheorem{prop}[thm]{Proposition}
\newtheorem{cor}[thm]{Corollary}

\theoremstyle{definition}

\newtheorem{dfn}[thm]{Definition}

\newtheorem{exm}[thm]{Example}

\theoremstyle{remark}
\newtheorem{rem}[thm]{Remark}


\newcommand{\exmsymbol}{\hfill$\circ$}

\newcommand{\cset}{\mathds{C}}

\newcommand{\nset}{\mathds{N}}

\newcommand{\rset}{\mathds{R}}


\newcommand{\conv}{\mathrm{conv}\,}

\newcommand{\diff}{\mathrm{d}}

\newcommand{\pos}{\mathrm{Pos}}

\newcommand{\supp}{\mathrm{supp}\,}

\newcommand{\inter}{\mathrm{int}\,}
\newcommand{\id}{\mathrm{id}}
\newcommand{\one}{\mathds{1}}

\newcommand{\cone}{\mathrm{cone}\,}
\newcommand{\Ev}{\mathrm{Ev}}


\newcommand{\cB}{\mathcal{B}}

\newcommand{\cS}{\mathcal{S}}

\newcommand{\cV}{\mathcal{V}}



\newcommand{\fg}{\mathfrak{g}}



\author{Philipp J.\ di Dio}
\address{Department of Mathematics and Statistics, University of Konstanz, Universit\"atsstra{\ss}e 10, D-78464 Konstanz, Germany}
\address{Department of Computer and Information Science, University of Konstanz, Universit\"atsstra{\ss}e 10, D-78464 Konstanz, Germany}
\address{Zukunftskolleg, Universtity of Konstanz, Universit\"atsstra{\ss}e 10, D-78464 Konstanz, Germany}
\address{philipp.didio@uni-konstanz.de}

\journal{ArXiv}


\title{Making Non-Negative Polynomials into Sums of Squares}

\begin{document}

\begin{abstract}
We study linear operators $T:\rset[x_1,\dots,x_n]\to\rset[x_1,\dots,x_n]$, especially for the purpose to move sets $S\subseteq\rset[x_1,\dots,x_n]$ into cones $C\subseteq\rset[x_1,\dots,x_n]$: $TS\subseteq C$.
We develop the theory of (semi-)groups of operators $(e^{tA})_{t\in\rset}$ on $\rset[x_1,\dots,x_n]$, which requires techniques from regular Fr\'echet Lie groups.
We study the special case of making non-negative polynomials $\pos(K)_{\leq 2d}$ with $K\subseteq\rset^n$ and $\inter K\neq \emptyset$ into sums of squares: $\tilde{T}\pos(K)_{\leq 2d}\subseteq \sum\rset[x_1,\dots,x_n]_{\leq d}^2$.
With $N:=\dim\rset[x_1,\dots,x_n]_{\leq 2d} = \binom{n+2d}{n}$, for $\tilde{T}$, a memory of at most $2N+1$ is required.
Matrix multiplications $\tilde{T}M$, $M\tilde{T}$, $\tilde{T}^{-1}M$, and $M\tilde{T}^{-1}$ of $\tilde{T}$ with any $M\in\rset^{N\times N}$ require at most $4N^2+1$ operations.
Transformations $\tilde{T}v$ and $\tilde{T}^{-1}v$ of vectors $v\in\rset^N$ require at most $4N+1$ operations.
Calculating $\tilde{T}^{-1}$ of $\tilde{T}$ requires only \emph{one} (!) operation.
\end{abstract}

\begin{keyword}
linear operator\sep regular Fr\'echet Lie group\sep generator
\MSC[2010] Primary 11E25; Secondary 13J30, 44A60, 47A57, 90C22.
\end{keyword}

\maketitle


\section{Introduction}
\label{sec:intro}

Let $n\in\nset$ and let $K\subseteq\rset^n$ be closed.
Denote by $\rset[x_1,\dots,x_n]$ the ring of polynomials in $n$ variables $x_1$, \dots, $x_n$ with real coefficients.
The cone
\[\pos(K) := \big\{ f\in\rset[x_1,\dots,x_n] \,\big|\, f(x)\geq 0\ \text{for all}\ x\in K\big\}\]
of polynomials which are non-negative on $K$ is very well-studied in real algebraic geometry, see e.g.\ \cite{schmud91,putina93,marshallPosPoly,bochnak98,prestelPosPoly,
schmudMomentBook,powersPositivityRealPolynomials,scheidererRealAlgebraicGeometry}.

Dual to $\pos(K)$ is the $K$-moment problem, i.e., when is a linear functional
\[L:\rset[x_1,\dots,x_n]\to\rset\]
represented by a measure $\mu$ with $\supp\mu\subseteq K$:
\[L(f) = \int f(x)~\diff\mu(x)\]
for all $f\in\rset[x_1,\dots,x_n]$.
This is also a very well-studied and still active area of research, see e.g.\ \cite{akhiezClassical,kreinMarkovMomentProblem,schmud91,marshallPosPoly,lauren09,schmudMomentBook}.

Non-negative polynomial $\pos(K)$ and $K$-moment sequences are hard to handle, since there are non-negative polynomials which are not sums of squares \cite{motzkin65} and there are linear functionals $L$ which are square positive, i.e., $L(f^2)\geq 0$ for all $f\in\rset[x_1,\dots,x_n]$, but which are not moment functionals \cite{schmud79}.

It is therefore surprising that linear operators (maps)
\begin{equation}\label{eq:linOpT}
T:\rset[x_1,\dots,x_n]\to\rset[x_1,\dots,x_n]
\end{equation}
are much less studied, especially when $T$ preservers non-negativity on $K$:
\begin{equation}\label{eq:KposPres}
T\pos(K)\subseteq\pos(K),
\end{equation}
see e.g.\ \cite{guterman08,netzer10,borcea11,curtoHeat22,didio24posPresConst,didio25KPosPresGen,
didio25hadamardLanger,didio26matrix,curtoHeat23,langer26operator}.
Operators which fulfill (\ref{eq:KposPres}) are called \emph{$K$-positivity preserver}.
See especially \cite{didio24posPresConst,didio25KPosPresGen,didio25hadamardLanger,
didio26matrix} for more recent progress in the theory of $K$-positivity preservers.
One of the main key aspects was the investigation of maps of the form
\[T_t = e^{tA}:\rset[x_1,\dots,x_n]\to\rset[x_1,\dots,x_n]\]
for some $A:\rset[x_1,\dots,x_n]\to\rset[x_1,\dots,x_n]$.

It has long been known that every operator (\ref{eq:linOpT}) has the form
\begin{equation}\label{eq:canonicalRepr}
T = \sum_{\alpha\in\nset_0^n} q_\alpha\cdot\partial^\alpha
\end{equation}
with unique polynomial coefficients $q_\alpha\in\rset[x_1,\dots,x_n]$.
This was already known to Hilbert and a proof can be found e.g.\ \cite{netzer10}.
The form (\ref{eq:canonicalRepr}) is called the \emph{canonical representation}.
See \cite{guterman08,netzer10,borcea11} for more on the topic.

With (\ref{eq:canonicalRepr}), for any $y\in\rset^n$, we define
\[T_y := \sum_{\alpha\in\nset_0^n} q_\alpha(y)\cdot\partial^\alpha.\]
$K$-Positivity preserver (\ref{eq:KposPres}) are deeply connected to $K$-moment sequences, see \Cref{dfn:KmomSeq} for $K$-moment sequences.

$K$-positivity preserver are fully characterized by the following result.
For $K=\rset^n$ this was already shown in \cite{borcea11} and an attempt for general $K\subseteq\rset^n$ was made in \cite{netzer10}.
The complete characterization was then proven in \cite{didio25KPosPresGen}.

\begin{thm}[{\cite[Thm.\ 3.5]{didio25KPosPresGen}}]\label{thm:KposPres}
Let $n\in\nset$, let $K\subseteq\rset^n$ be closed, and let
\[T = \sum_{\alpha\in\nset_0^n} q_\alpha\cdot\partial^\alpha:\rset[x_1,\dots,x_n]\to\rset[x_1,\dots,x_n]\]
be linear, $q_\alpha\in\rset[x_1,\dots,x_n]$ for all $\alpha\in\nset_0^n$.
Then the following are equivalent:
\begin{enumerate}[(i)]
\item $T$ is a $K$-positivity preserver.

\item For all $y\in K$, $(\alpha!\cdot q_\alpha(y))_{\alpha\in\nset_0^n}$ is a $(K-y)$-moment sequence.
\end{enumerate}
\end{thm}

In the current investigation we are not just interested in $K$-positivity preservers, but in linear maps $T$ with
\[T\pos(K) \subseteq \sum\rset[x_1,\dots,x_n]^2,\]
i.e., we want to transform non-negative polynomials $\pos(K)$ into sums of squares $\sum\rset[x_1,\dots,x_n]^2$.
More generally, we are interested in using $T$ of the form $T = e^{A}$ such that a given set $S\subseteq\rset[x_1,\dots,x_n]$ is moved into some cone $C\subseteq\rset[x_1,\dots,x_n]$.
This allows then for two (possible) applications:
\begin{enumerate}[\quad (a)]
\item Let
\[F:\rset[x_1,\dots,x_n]\to\rset,\]
be a given (not necessarily linear) function(al), let $S\subseteq\rset[x_1,\dots,x_n]$, and let $T$ be a bijective (linear) operator (\ref{eq:linOpT}).
Then, clearly,
\[\min_{f\in S} F(f) = \min_{f\in TS} (F\circ T^{-1})(f).\]

\item Let
\[F:\rset[x_1,\dots,x_n]\to\rset[x_1,\dots,x_n],\]
be a given (not necessarily linear) operator.
Then one seeks for a (linear) operator $T$ such that
\[1\approx \kappa(F\circ T) \ll \kappa(F),\]
where $\kappa$ is the \emph{condition number}.
This is called \emph{preconditioning}.
\end{enumerate}
In (a) one is interested in moving the set $S$ to a nicer set $TS$, e.g., a subset of sums of squares.
In (b) one is interested in increasing the numerical stability of a problem (lowering the condition number $\kappa$ by preconditioning), such that larger problems can be handled with higher precision or are numerically more stable.
In both cases we have to pay the additional price to calculate
\[T^{-1},\qquad F\circ T,\qquad\text{or}\qquad F\circ T^{-1}.\]
These are infinite versions of matrix multiplications.
With
\[T:\rset[x_1,\dots,x_n]_{\leq d}\to\rset[x_1,\dots,x_n]_{\leq d}\]
for some $d\in\nset_0$, these are the usual matrix multiplications and inversion.
The theory we are developing here therefore also has to account for computational costs, when we want any hope for an application.
As we will see, the approach via operators
\[T_t = e^{tA}\]
fulfills this requirement.

Matrix multiplication and inversion are \emph{very expensive} calculations and the matrix inversion is even numerically unstable.
For matrix multiplication in $\rset^{N\times N}$, the \emph{schoolbook algorithm} requires at most
\[2N^3-N^2\]
operations, while the \emph{Strassen's algorithm} requires at most
\[4.7\cdot N^{\log_2 7} \qquad\text{with}\qquad \log_2 7 \approx 2.8073549\]
operations \cite[Fact 2, p.\ 355]{strassen65}.
Matrix inversion is possible with at most $5.64\cdot N^{\log_2 7}$ operations \cite[Fact 4, p.\ 356]{strassen65}.
Subsequent investigations on matrix multiplication in \cite{alman25} lowered the complexity bound $O(N^\omega)$ to
\[\omega \leq 2.371339.\]
While the schoolbook and Strassen's algorithms are used in practice, the subsequent improvements (if explicitly known) have large hidden constants and are therefore \emph{galactic algorithms}.
Till to today, the lower bound on $\omega$ is the trivial lower bound
\[2\leq\omega.\]

By \Cref{rem:complexity}, we therefore have to emphasize, that the application of the transformation $T$ in \Cref{thm:main} is not only $O(N^2)$, but it requires at most
\[4N^2+1\]
operations, i.e., it is an extremely efficient algorithm compared to the expensive matrix multiplication to perform the transformation $T$.
The matrices $\tilde{T}$ are so special and efficient, that the complexity meets even the trivial lower bound $2\leq\omega$.
The efficiency of the transformations in \Cref{thm:main} is also reflected in the facts, that saving $\tilde{T}\in\rset^{N\times N}$ requires a memory of only $2N+1$ and \emph{calculating the inverse $\tilde{T}^{-1}$ requires only a single operation}!
The transformation
\[Tp \qquad\text{or}\qquad T^{-1}p\]
of a single polynomial $p$ requires at most
\[4N+1\]
operations.
Therefore, readers, who are more interested in applications and complexity, might now see the true power of the abstract theory developed in Sections \ref{sec:fg} to \ref{sec:KposGen}, since requiring a single operation to calculate $\tilde{T}^{-1}\in\rset^{N\times N}$, only $4N+1$ operations for the transformation of a single polynomial, and only $4N^2+1$ operations for the transformation of a map is \emph{computationally very efficient}.

\paragraph{Structure of the Paper}
In \Cref{sec:prelim} we give the preliminaries to this paper to make it as self-contained as possible.
In \Cref{sec:fg} we define and fully characterize the set $\fg$ of all linear maps
\[A:\rset[x_1,\dots,x_n]\to\rset[x_1,\dots,x_n]\]
such that $e^{tA}:\rset[x_1,\dots,x_n]\to\rset[x_1,\dots,x_n]$ for all $t\in\rset$.
In \Cref{sec:fgAlgebras} we further investigate the set $\fg$ and we characterize several Lie algebras in $\fg$ but also show that $\fg$ is itself not a Lie algebra.
Because of the connection of $K$-positivity preservers to $K$-moment sequences (\Cref{thm:KposPres}), in \Cref{sec:stochel} we give a three-line-proof of Stochel's Theorem (\Cref{thm:stochel}).
In \Cref{sec:KposGen} we then characterize generators of $K$-positivity preserving semi-groups.
In \Cref{sec:nonnegativeSoS} we apply the theory developed in Sections \ref{sec:fg} to \ref{sec:KposGen} to make non-negative polynomials into sums of squares.
Since $(e^{tA})_{t\geq 0}$ is a homogeneous Markov process on $\rset[x_1,\dots,x_n]$, in \Cref{sec:nonMarkov} we give an example of a process which is not a homogeneous Markov process on $\rset[x_1,\dots,x_n]$ to demonstrate further directions in which the theory can be developed and applied.

\section{Preliminaries}
\label{sec:prelim}

Let
\[n\in\nset := \{1,2,3,\dots\},\quad \nset_0 := \{0,1,2,\dots\}, \quad\text{and}\quad \alpha = (\alpha_1,\dots,\alpha_n)\in\nset_0^n.\]
We set
\[x^\alpha := x_1^{\alpha_1}\cdots x_n^{\alpha_n},\quad \partial^\alpha := \partial_1^{\alpha_1}\cdots\partial_n^{\alpha_n},\quad \alpha! := \alpha_1!\cdots \alpha_n!,\quad |\alpha| := \alpha_1 + \dots + \alpha_n.\]

\subsection{Moments}

In \Cref{thm:KposPres} we have seen the connection of $K$-positivity preserver to $K$-moment sequences.
Hence, for the readers convenience we repeat the definition of $K$-moment sequence.

\begin{dfn}\label{dfn:KmomSeq}
Let $n\in\nset$ and let $K\subseteq\rset^n$ be closed.
A real sequence
\[s = (s_\alpha)_{\alpha\in\nset_0^n}\]
is called a \emph{$K$-moment sequence}, if there exists a measure $\mu$ on $\rset^n$ with
\[\supp\mu\subseteq K \qquad\text{and}\qquad s_\alpha = \int x^\alpha~\diff\mu(x)\]
for all $\alpha\in\nset_0^n$.
\end{dfn}

\begin{dfn}\label{dfn:riesz}
Let $n\in\nset$ and let $s = (s_\alpha)_{\alpha\in\nset_0^n} \in\cset^{\nset_0^n}$.
We define the \emph{Riesz functional}
\[L_s:\cset[x_1,\dots,x_n]\to\cset \qquad\text{by}\qquad L_s(x^\alpha) = s_\alpha\qquad \text{for all}\ \alpha\in\nset_0^n\]
with linearly extension to all $\cset[x_1,\dots,x_n]$.
\end{dfn}

\subsection{Topology}\label{subsec:top}

Since we have to work with infinite sums and limits, we have to introduce the Fr\'echet and the LF-topology for our purposes and give for our purposes the two most important examples.

A \emph{Fr\'echet space} is a metrizable, complete, and locally convex space.
Our guiding example is the following.

\begin{exm}[see e.g.\ {\cite[pp.\ 91--92, Exm.\ III]{treves67}}]
Let $n\in\nset$.
Equip
\[\cset^{\nset_0^n} := \big\{s = (s_\alpha)_{\alpha\in\nset_0^n} \,\big|\, s_\alpha\in\cset\ \text{for all}\ \alpha\in\nset_0^n\big\}\]
with the semi-norms
\[|s|_d := \max_{\alpha\in\nset_0^n:|\alpha|\leq d} |s_\alpha|\]
for all $d\in\nset_0$.
Then $\cset^{\nset_0^n}$ is a Fr\'echet space.
\exmsymbol
\end{exm}

For the exact definition of an LF-space see \cite[p.\ 126]{treves67}.
For our purposes it is sufficient to have the following example.

\begin{exm}[see e.g.\ {\cite[p.\ 130, Exm.\ I]{treves67}}]
Let $n\in\nset$.
Then
\[\cset[x_1,\dots,x_n]\]
is a LF-space with defining sequence
\[E_d := \cset[x_1,\dots,x_n]_{\leq d}.\]
To understand the LF-topology of $\cset[x_1,\dots,x_n]$ it is sufficient to know that a sequence $(p_i)_{i\in\nset_0}$ in $\cset[x_1,\dots,x_n]$ converges to some $p\in\cset[x_1,\dots,x_n]$ if and only if there exists a $D\in\nset_0$ with
\[\sup_{i\in\nset_0} \deg p_i \leq D < \infty \qquad\text{and}\qquad p_i\xrightarrow{i\to\infty} p\ \text{in}\ \cset[x_1,\dots,x_n]_{\leq D}. \tag*{$\circ$}\]
\end{exm}

The spaces
\[\cset^{\nset_0^n} \qquad\text{and}\qquad \cset[x_1,\dots,x_n]\]
are dual to each other.
The dual pairing $\langle\,\cdot\,\,,\,\cdot\,\rangle$ is given by the Riesz functional $L_s$ from \Cref{dfn:riesz} by
\[\langle s,p\rangle := L_s(p)\]
for all $s\in\cset^{\nset_0^n}$ and $p\in\cset[x_1,\dots,x_n]$.
For $\varepsilon > 0$, a curve
\[c: (-\varepsilon,\varepsilon)\to\rset[x_1,\dots,x_n], \quad t\mapsto c(t) = \sum_{\alpha\in\nset_0^n} c_\alpha(t)\cdot x^\alpha\]
is $C^k$ for some $k\in\nset_0$, if and only if, for all $s\in\cset^{\nset_0^n}$,
\[\langle s,g(t)\rangle = L_s(g(t)): (-\varepsilon,\varepsilon)\to\rset \qquad\text{is in}\qquad C^k((-\varepsilon,\varepsilon),\rset).\]

We gave in (\ref{eq:canonicalRepr}) the canonical representation of linear operators
\[T:\cset[x_1,\dots,x_n]\to\cset[x_1,\dots,x_n].\]
Via the duality
\[\langle T^* s, p\rangle = \langle s, Tp\rangle\]
for all $s\in\cset^{\nset_0^n}$ and $p\in\cset[x_1,\dots,x_n]$ we get linear operators $S = T^*$ with
\[S:\cset^{\nset_0^n}\to\cset^{\nset_0^n}\]
on sequences.
Clearly,
\[T^{**} = T \qquad\text{and}\qquad S^{**} = S.\]
Taking $e_\alpha := (\delta_{\alpha,\beta})_{\beta\in\nset_0^n}$ as basis of $\cset^{\nset_0^n}$, operators $S$ on sequences are represented by infinite matrices such that in every row there are only finitely many non-zero entries.
To see this, we have that the projection to the $\alpha$-coordinate
\[\cset^{\nset_0^n}\to\cset,\; s\mapsto (Ss)_\alpha \qquad\text{is in}\qquad (\cset^{\nset_0^n})^* \cong \cset[x_1,\dots,x_n]\]
for all $\alpha\in\nset_0^n$ and hence there exists a polynomial $p_\alpha\in\cset[x_1,\dots,x_n]$ with
\[ (Ss)_\alpha = \langle s, p_\alpha\rangle\]
for all $s\in\cset^{\nset_0^n}$.

For more on topological vector spaces see e.g.\ \cite{treves67,koethe69TopVecSp1,koethe79TopVecSp2,schaef99}.

\subsection{Regular Fr\'echet Lie Groups}

We are interested all linear operators $A:\rset[x_1,\dots,x_n]\to\rset[x_1,\dots,x_n]$ such that
\[(e^{tA}:\rset[x_1,\dots,x_n]\to\rset[x_1,\dots,x_n])_{t\in\rset}\]
is a group.
We already made the connection to regular Fr\'echet Lie groups in \cite{didio24posPresConst} and \cite{didio25KPosPresGen}.
Hence, we repeat its definition here.

\begin{dfn}[see e.g.\ {\cite[p.\ 63, Dfn.\ 1.1]{omori97}}]\label{dfn:frechetLieGroup}
We call $(G,\,\cdot\,)$ a (\emph{regular}) \emph{Fr\'echet Lie group} if the following conditions are fulfilled:
\begin{enumerate}[(i)]
\item $G$ is an infinite dimensional smooth Fr\'echet manifold.

\item $(G,\,\cdot\,)$ is a group.

\item The map $G\times G\to G$, $(A,B)\mapsto A\cdot B^{-1}$ is smooth.

\item The \emph{Fr\'echet Lie algebra} $\fg$ of $G$ is isomorphic to the tangent space $T_e G$ of $G$ at the unit element $e\in G$.

\item $\exp:\fg\to G$ is a smooth mapping such that
\[\left.\frac{\diff}{\diff t}\right|_{t=0} \exp(t u) = u\]
holds for all $u\in\fg$.

\item The space $C^1(G,\fg)$ of $C^1$-curves in $G$ coincides with the set of all $C^1$-curves in $G$ under the Fr\'echet topology.
\end{enumerate}
\end{dfn}

For more on infinite dimensional manifolds, differential calculus, Lie groups, and Lie algebras see e.g.\ \cite{leslie67,omori74,omori97,schmed23}.

\section{The Set $\fg$ of Generators}
\label{sec:fg}

Our main object of interest in this section is the following.

\begin{dfn}\label{dfn:fg}
Let $n\in\nset$ and let $\varepsilon>0$.
We define
\begin{multline*}
\fg := \Big\{A:\rset[x_1,\dots,x_n]\to\rset[x_1,\dots,x_n]\ \text{linear} \,\Big|\\ e^{tA}:\rset[x_1,\dots,x_n]\to\rset[x_1,\dots,x_n]\
\text{for all}\ t\in [0,\varepsilon)\}.
\end{multline*}
\end{dfn}

Clearly,
\[\fg = \rset\cdot\fg,\]
i.e., $\fg$ is a cone.
The following theorem gives characterizations of $\fg$, also showing that $\fg$ is independent on $\varepsilon>0$ and hence that it is a cone.
Before we state the theorem, we need the following lemma.

\begin{lem}\label{lem:supDegreeBound}
Let $n\in\nset$ and let
\[c: [0,1]\to\rset[x_1,\dots,x_n]\]
be continuous.
Then
\[\sup_{t\in [0,1]} \deg c(t)\ <\ \infty.\]
\end{lem}
\begin{proof}
Since $c$ is continuous, for every $s\in\cset^{\nset_0^n}$,
\[\langle s,c(t)\rangle = L_s(c(t))\]
is continuous.
Assume
\[\sup_{t\in [0,1]} \deg c(t) = \infty.\]
By compactness of $[0,1]$ and by choosing a convergent subsequence, there exists a sequence $(t_k)_{k\in\nset}$ in $[0,1]$ and a $t_0\in [0,1]$ such that
\[\lim_{k\to\infty} t_k = t_0, \qquad \deg c(t_k) < \deg c(t_{k+1}), \qquad\text{and}\qquad \lim_{k\to\infty} \deg c(t_k) = \infty.\]
For $k=1$, choose $\tilde{s}_\alpha\in\cset$ with $\alpha\in\nset_0^n$ and $|\alpha|\leq \deg c(t_1)$ such that
\[\sum_{\alpha\in\nset_0^n: |\alpha|\leq \deg c(t_1)} \tilde{s}_\alpha\cdot c_\alpha(t_1) = 1.\]
Since $\deg c(t_k) < \deg c(t_{k+1})$ for all $k\in\nset$, by proceeding in this manner we can choose $\tilde{s}_\alpha\in\cset$ for all $\alpha\in\nset_0^n$ with $\deg c(t_k) < |\alpha| \leq \deg c(t_{k+1})$ and
\[\sum_{\alpha\in\nset_0^n: |\alpha|\leq \deg c(t_{k+1})} \tilde{s}_\alpha\cdot c_\alpha(t_{k+1}) = \sum_{\alpha\in\nset_0^n} \tilde{s}_\alpha\cdot c_\alpha(t_{k+1}) = k+1,\]
i.e., by induction, there exists a sequence $\tilde{s} = (\tilde{s}_\alpha)_{\alpha\in\nset_0^n}\in\cset^{\nset_0^n}$ such that
\[\langle \tilde{s}, c(t_k)\rangle = \sum_{\alpha\in\nset_0^n} \tilde{s}_\alpha\cdot c_\alpha(t_k) = k\]
for all $k\in\nset$.
Hence,
\[\lim_{k\to\infty} \langle\tilde{s},c(t_k)\rangle = \infty.\]
This contradicts continuity of $c$ on $[0,1]$ and hence we proved the assertion.
\end{proof}

We can now characterize $\fg$.

\begin{thm}\label{thm:gCharac}
Let $n\in\nset$ and let
\[A:\rset[x_1,\dots,x_n]\to\rset[x_1,\dots,x_n]\]
be linear.
Then the following are equivalent:
\begin{enumerate}[(i)]
\item $A\in\fg$.

\item $\displaystyle \sup_{k\in\nset_0} \deg A^k x^\alpha < \infty$ for all $\alpha\in\nset_0^n$.

\item $\displaystyle \sup_{k\in\nset_0} \deg A^k p < \infty$ for all $p\in\rset[x_1,\dots,x_n]$.

\item For all $i\in\nset_0$, there exist subspaces $V_i\subseteq\rset[x_1,\dots,x_n]$ with
\begin{enumerate}[(a)]
\item $\dim V_i < \infty$,

\item $\displaystyle \bigcup_{i\in\nset_0} V_i = \rset[x_1,\dots,x_n]$, and

\item $AV_i \subseteq V_i$ for all $i\in\nset_0$.
\end{enumerate}

\end{enumerate}
\end{thm}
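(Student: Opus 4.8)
The plan is to prove the cycle of implications (i)$\Rightarrow$(ii)$\Rightarrow$(iii)$\Rightarrow$(i), which seems the most natural route.

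\medskip

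\noindent\textbf{(i)$\Rightarrow$(ii).} Assume $A\in\fg$, so $e^{tA}p$ is a well-defined polynomial for every $p\in\rset[x_1,\dots,x_n]$ and all $t\in\rset$. Fix $\alpha\in\nset_0^n$ and suppose toward a contradiction that $\sup_{k}\deg A^k x^\alpha=\infty$. Then there is a strictly increasing sequence $k_1<k_2<\cdots$ with $\deg A^{k_j}x^\alpha\to\infty$. I would argue that $e^{tA}x^\alpha=\sum_k \frac{t^k}{k!}A^k x^\alpha$ cannot then lie in $\rset[x_1,\dots,x_n]$ for any $t\neq 0$: the point is that cancellation of the top-degree terms across different $k$ is impossible to sustain because the coefficients $t^k/k!$ are fixed and nonzero, so as in \Cref{exm:ABshiftEvenOdd} the partial sums have degree tending to $\infty$. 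The one subtlety is that a priori a high-degree term from $A^{k}x^\alpha$ could be cancelled by a high-degree term from $A^{k'}x^\alpha$ with $k'\ne k$; I would rule this out by a careful bookkeeping argument, for instance by tracking the maximal degree $D_k := \deg A^k x^\alpha$ and noting that once $D_k$ strictly exceeds all of $D_0,\dots,D_{k-1}$ its leading contribution to $e^{tA}x^\alpha$ survives (being scaled by $t^k/k!\ne 0$) unless later terms bring it back down, which would force infinitely many sign-coordinated coincidences — and passing to a subsequence of "record" indices $k$ makes this precise. Since $e^{tA}x^\alpha$ must be a polynomial, this is the desired contradiction, giving (ii). This is the step I expect to be the main obstacle, precisely because making the no-cancellation argument rigorous in full generality (rather than in the clean shift example) requires some care.

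\medskip

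\noindent\textbf{(ii)$\Rightarrow$(iii).} Assume $\sup_k \deg A^k x^\alpha=:N_\alpha<\infty$ for every $\alpha$. Enumerate the monomials $x^{\alpha^{(0)}},x^{\alpha^{(1)}},x^{\alpha^{(2)}},\dots$ (say in some order refining degree), and for each $i\in\nset_0$ define
\[
V_i := \lin\Big\{\, A^k x^{\alpha^{(j)}} \;\Big|\; 0\le j\le i,\ k\in\nset_0 \,\Big\}.
\]
Then $V_i$ is spanned by polynomials all of degree at most $\max_{0\le j\le i} N_{\alpha^{(j)}}$, so $\dim V_i<\infty$, giving (a). Since $x^{\alpha^{(i)}}\in V_i$ for each $i$, the union $\bigcup_i V_i$ contains every monomial and hence equals $\rset[x_1,\dots,x_n]$, giving (b). Finally $AV_i\subseteq V_i$ is immediate from the definition, since $A\cdot A^k x^{\alpha^{(j)}}=A^{k+1}x^{\alpha^{(j)}}$ is again one of the spanning vectors; this gives (c). (If one insists on an increasing chain one already has $V_0\subseteq V_1\subseteq\cdots$ by construction.)

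\medskip

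\noindent\textbf{(iii)$\Rightarrow$(i).} Assume the subspaces $V_i$ exist. Fix $p\in\rset[x_1,\dots,x_n]$. By (b) there is some $i$ with $p\in V_i$, and by (c) then $A^k p\in V_i$ for all $k\in\nset_0$. Since $\dim V_i<\infty$, the restriction $A|_{V_i}:V_i\to V_i$ is a linear endomorphism of a finite-dimensional space, so $e^{tA}|_{V_i}=\sum_k \frac{t^k}{k!}(A|_{V_i})^k$ converges in $V_i$ for every $t\in\rset$ and $e^{tA}p\in V_i\subseteq\rset[x_1,\dots,x_n]$ is a well-defined polynomial. As $p$ was arbitrary, $e^{tA}:\rset[x_1,\dots,x_n]\to\rset[x_1,\dots,x_n]$ is well-defined for all $t\in\rset$, i.e.\ $A\in\fg$. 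This closes the cycle and proves the theorem.
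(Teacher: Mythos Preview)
Your steps (ii)$\Rightarrow$(iii) and (iii)$\Rightarrow$(i) are correct and essentially match the paper's argument (the paper builds the invariant subspaces by iterating $V_{\alpha,i+1}:=V_{\alpha,i}+AV_{\alpha,i}$ inside $\rset[x_1,\dots,x_n]_{\leq D}$ until stabilization, which amounts to your direct span; the paper declares (iii)$\Rightarrow$(i) clear).

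The gap is in (i)$\Rightarrow$(ii). Your no-cancellation argument via ``record indices'' and ``infinitely many sign-coordinated coincidences'' is not a proof, and you yourself flag it as the main obstacle --- but in fact no combinatorial bookkeeping is needed at all. Recall from the preliminaries that convergence of a sequence in the LF-space $\rset[x_1,\dots,x_n]$ \emph{requires} a uniform degree bound on the terms. Hence if the series $e^{tA}x^\alpha=\sum_k\frac{t^k}{k!}A^kx^\alpha$ converges in $\rset[x_1,\dots,x_n]$ for some fixed $t\neq 0$, the partial sums $S_N$ satisfy $\sup_N\deg S_N\leq D<\infty$, and then $\frac{t^N}{N!}A^Nx^\alpha=S_N-S_{N-1}$ gives $\deg A^Nx^\alpha\leq D$ for every $N$; this already is (ii), with no possibility of cancellation to worry about. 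The paper packages the same idea slightly differently: it notes that $t\mapsto e^{tA}x^\alpha$ stays in the fixed finite-dimensional space $\rset[x_1,\dots,x_n]_{\leq D}$ for all $t$ and then simply differentiates at $t=0$,
\[
A^kx^\alpha \;=\; \partial_t^k\, e^{tA}x^\alpha\Big|_{t=0}\ \in\ \rset[x_1,\dots,x_n]_{\leq D}.
\]
Either way, (i)$\Rightarrow$(ii) is a two-line argument rather than the delicate step you anticipated.
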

\begin{proof}
The implications ``(iii) $\Leftrightarrow$ (ii) $\Rightarrow$ (i)'' and ``(iv) $\Rightarrow$ (i)'' are clear.
The implication ``(ii) $\Leftrightarrow$ (iii)'' follows from linearity of $A$.
It remains to show the remaining implications.

(i) $\Rightarrow$ (ii):
Since
\[e^{tA}:\rset[x_1,\dots,x_n]\to\rset[x_1,\dots,x_n],\]
is continuous for each $\alpha\in\nset_0^n$, by \Cref{lem:supDegreeBound}, there exists a $D=D(\alpha)\in\nset_0^n$ such that
\[e^{tA}x^\alpha\ \subseteq\ \rset[x_1,\dots,x_n]_{\leq D}\]
for all $t\in [0,\varepsilon/2]$.
Hence,
\[A^k x^\alpha\ =\ \partial^k_t e^{tA}x^\alpha \Big|_{t=0}\ \in\ \rset[x_1,\dots,x_n]_{\leq D}\]
for all $k\in\nset_0$, which proves (ii).
Here, $\partial_t$ only needs to be the right-sided derivative.

[(i) $\Leftrightarrow$ (ii) $\Leftrightarrow$ (iii)] $\Rightarrow$ (iv):
Let $\alpha\in\nset_0^n$.
By (i) and (ii) (\Cref{lem:supDegreeBound}), there exists a $D=D(\alpha)\in\nset_0$ with
\[e^{tA}x^\alpha,\ A^k x^\alpha\ \in\ \rset[x_1,\dots,x_n]_{\leq D}\]
for all $k\in\nset_0$ and all $t\in [0,\varepsilon/2]$.
Set
\[V_{\alpha,0} := \rset\cdot x^\alpha.\]
By (ii),
\[AV_{\alpha,0}\quad \subseteq\quad \rset[x_1,\dots,x_n]_{\leq D}.\]
For all $i\in\nset_0$, define
\[V_{\alpha,i+1} := V_{\alpha,i} + AV_{\alpha,i},\]
i.e.,
\[V_{\alpha,k}=\rset\cdot x^\alpha +\rset\cdot Ax^\alpha +\dots+\rset\cdot A^k x^\alpha\]
for all $k\in\nset_0$.
By (ii) and the definition of $V_{\alpha,i+1}$,
\[V_{\alpha,i} \quad\subseteq\quad V_{\alpha,i+1} \quad\subseteq\quad \rset[x_1,\dots,x_n]_{\leq D}\]
and hence
\begin{equation}\label{eq:increasingViSequence}
V_{\alpha,0} \quad\subseteq\quad V_{\alpha,1} \quad\subseteq\quad \dots \quad\subseteq\quad \rset[x_1,\dots,x_n]_{\leq D}.
\end{equation}
Since (\ref{eq:increasingViSequence}) is an increasing sequence of finite dimensional vector spaces $V_{\alpha,i}$ bounded from above by the finite dimensional vector space
\[\rset[x_1,\dots,x_n]_{\leq D},\]
there exists an index $I(\alpha)\in\nset_0$ such that
\[V_{\alpha,I(\alpha)} = V_{\alpha,I(\alpha)+1},\]
i.e.,
\[\dim V_{\alpha,I(\alpha)} < \infty \qquad\text{and}\qquad AV_{\alpha,I(\alpha)} \subseteq V_{\alpha,I(\alpha)}.\]
Since $\alpha\in\nset_0^n$ was arbitrary and $x^\alpha\in V_{\alpha,I(\alpha)}$,
\[\rset[x_1,\dots,x_n] = \bigcup_{\alpha\in\nset_0^n} V_{\alpha,I(\alpha)}.\]
Since $\nset_0^n$ is countable, we proved (iv) (a) -- (c).
\end{proof}

\Cref{thm:gCharac} (iv) tells us that
\[e^{tA}:\rset[x_1,\dots,x_n]\to\rset[x_1,\dots,x_n]\]
is only the matrix exponential function
\[e^{tA}|_{V_i},\]
since for an operator $A\in\fg$ we only need to know $A$ on the finite dimensional invariant subspace $V_i$, i.e., its calculation is easy as soon as the $V_i$ for $A$ are known.
An algorithm for the calculation of these $V_i$ is explicitly given in the proof of \Cref{thm:gCharac}.
Therefore, by \Cref{thm:gCharac} (vi), we have the following corollary.

\begin{cor}\label{cor:taylor}
Let $n\in\nset$ and $A\in\fg$.
Then
\[e^{tA}\; =\; \sum_{k\in\nset_0} \frac{t^k\cdot A^k}{k!}\; =\; \lim_{k\to\infty} \left(1 + \frac{t\cdot A}{k} \right)^k\; =\; \lim_{k\to\infty} \left(1 - \frac{t\cdot A}{k} \right)^{-k}\]
for all $t\in\rset$.
\end{cor}
\begin{proof}
Since $A\in\fg$, by \Cref{thm:gCharac} (iv), there exists a family
\[\cV = \{V_i\}_{i\in\nset_0}\]
of subspaces $V_i\subseteq\rset[x_1,\dots,x_n]$ with
\[\rset[x_1,\dots,x_n] = \bigcup_{i\in\nset_0} V_i, \qquad AV_i\subseteq V_i, \qquad\text{and}\qquad \dim V_i < \infty\]
for all $i\in\nset_0$.
Since all $V_i$ are finite dimensional and invariant under $A$, $A|_{V_i}$ is a linear operator on a finite dimensional vector space (matrix) and for matrices all three equalities hold.
\end{proof}

\begin{rem}
\Cref{cor:taylor} emphasizes the special situation we have on the LF-space $\rset[x_1,\dots,x_n]$ compared to a Hilbert space like $L^2(\rset,\rset)$.
On $L^2(\rset,\rset)$ the translation group
\[(T_t)_{t\in\rset} \quad\text{with}\quad (T_tf)(x) := f(x+t)\quad\text{and}\quad f\in L^2(\rset,\rset)\]
is generated by $\partial_x$.
But no operator $T_t$ with $t\neq 0$ is given by its (formal) Taylor expansion
\[e^{t\cdot\partial_x} = \sum_{k\in\nset_0} \frac{t^k\cdot \partial_x^k}{k!}\]
on all $L^2(\rset,\rset)$, since $f\in L^2(\rset,\rset)$ even does not need to be differentiable.

Also the heat equation, i.e., convolution with the heat kernel, is solved by the heat kernel generated by
\[\Delta := \partial_x^2.\]
On $L^2(\rset,\rset)$ this generates only a \emph{semi}-group ($t\geq 0$), i.e., no time reversal is possible in general.
By \Cref{thm:gCharac}, on $\rset[x_1,\dots,x_n]$ any semi-group
\[(e^{tA})_{t\geq 0}\]
extends uniquely to a group
\[(e^{tA})_{t\in\rset}\]
with the same generator $A$, i.e., in \Cref{dfn:fg} it was sufficient to have
\[e^{tA}:\rset[x_1,\dots,x_n]\to\rset[x_1,\dots,x_n]\]
for all $t\in [0,\varepsilon)$ for some $\varepsilon>0$ and we immediately get it for all $t\in\rset$.
\exmsymbol
\end{rem}

\section{Regular Fr\'echet Lie Algebras in $\fg$}
\label{sec:fgAlgebras}

We know want to investigate regular Fr\'echet Lie algebras in $\fg$.

We have seen that $\fg$ is a cone.
However, $\fg$ is not convex and it is especially not a vector space, i.e., there are $A,B\in\fg$ such that $A+B\notin\fg$.
We have the following example of such maps $A$ and $B$.

\begin{exm}\label{exm:ABshiftEvenOdd}
Let $n=1$.
Define the linear operators
\[A:\rset[x]\to\rset[x] \qquad\text{and}\qquad B:\rset[x]\to\rset[x]\]
by
\[Ax^k := \begin{cases}
x^k & \text{for}\ k=2m\ \text{with}\ m\in\nset_0,\\
x^{k+1} & \text{for}\ k=2m-1\ \text{with}\ m\in\nset
\end{cases}\]
and
\[Bx^k := \begin{cases}
x^{k+1} & \text{for}\ k=2m\ \text{with}\ m\in\nset_0,\\
x^k & \text{for}\ k=2m-1\ \text{with}\ m\in\nset
\end{cases}\]
for all $k\in\nset_0$ with linear extension to all $\rset[x]$.
Then
\[A\rset[x]_{\leq 2m}\ \subseteq\ \rset[x]_{\leq 2m} \qquad\text{and}\qquad B\rset[x]_{\leq 2m+1}\ \subseteq\ \rset[x]_{\leq 2m+1}\]
for all $m\in\nset_0$.
Since $A|_{\rset[x]_{\leq 2m}}$ and $B|_{\rset[x]_{\leq 2m+1}}$ are linear operators on finite dimensional spaces, i.e., matrices, $e^{tA}$ and $e^{tB}$ are well-defined for every $p\in\rset[x]$ and hence they are well-defined as linear maps $\rset[x]\to\rset[x]$: $A,B\in\fg$.
However,
\[(A+B)x^k\quad =\quad \begin{cases}
x^k + x^{k+1} & \text{for}\ k=2m\ \text{with}\ m\in\nset_0,\\
x^{k+1} + x^k & \text{for}\ k=2m-1\ \text{with}\ m\in\nset
\end{cases}\quad =\quad x^k + x^{k+1}\]
for all $k\in\nset_0$ which violates \Cref{thm:gCharac} (ii), i.e.,
\[\deg \left(\sum_{i=0}^d \frac{t^i}{i!}\cdot (A+B)^i 1\right) = d \quad\xrightarrow{d\to\infty}\quad\infty\]
for all $t\neq 0$.
Hence, $e^{t(A+B)}$ is not a map $\rset[x]\to\rset[x]$ and $A+B\notin\fg$.
\exmsymbol
\end{exm}

Hence, $\fg$ is a structure that is already larger than a (regular Fr\'echet) Lie algebra.

The previous example showed that if for a linear operator
\[A:\rset[x_1,\dots,x_n]\to\rset[x_1,\dots,x_n]\]
there exist $d_0 < d_1 < d_2 < \dots$ in $\nset_0$ with
\begin{equation}\label{eq:invariantDegree}
A\rset[x_1,\dots,x_n]_{\leq d_i}\ \subseteq\ \rset[x_1,\dots,x_n]_{\leq d_i}
\end{equation}
for all $i\in\nset_0$, then $A\in\fg$.
This was an essential property used in \cite{cuchie12}, \cite{didio24posPresConst}, and \cite{didio25KPosPresGen} to calculate and work with $e^{tA}$.
All linear operators $A$ with (\ref{eq:invariantDegree}) form a (regular Fr\'echet) Lie algebra, see \Cref{cor:LieAlgInvSubSpaces}.
But before we prove that, let us have a look at some examples to get more familiar with \Cref{thm:gCharac} (iv).
The next example shows that (\ref{eq:invariantDegree}) is sufficient for $A\in\fg$, but (\ref{eq:invariantDegree}) is not necessary for $A\in\fg$.

\begin{exm}
Let $n=1$ and let
\[(p_i)_{i\in\nset}= (2,3,5,7,11,\dots)\]
be the list of prime numbers $p_i$.
Define the linear map
\[A:\rset[x]\to\rset[x]\]
by
\[Ax^k := \begin{cases}
x^{p_{m+1}} & \text{for}\ k=2m\ \text{with}\ m\in\nset,\\
0 & \text{else}.
\end{cases}\]
Then
\[A\rset[x]_{\leq d}\ \not\subseteq\ \rset[x]_{\leq d}\]
for all $d\in\nset$ with $d\geq 8$.
However,
\[A1 = 0, \quad Ax = 0, \quad Ax^2 = x^{p_2} = x^3,\quad Ax^3 = 0,\quad Ax^4 = x^{p_3} = x^5,\quad \dots\]
and therefore
\[A^2 x^k = 0\]
for all $k\in\nset_0$, i.e.,
\[e^{tA}:\rset[x]\to\rset[x]\]
for all $t\in\rset$ and $A\in\fg$.
\exmsymbol
\end{exm}

In general, in \Cref{thm:gCharac} (vi) we can only have
\[V_i \subseteq V_{i+1}\]
for the invariant subspaces $V_i$ of an operator $A\in\fg$, as the next example shows.

\begin{exm}\label{exm:jordan}
Let $n=1$ and
\[A = \partial_x\in\fg.\]
Then every finite dimensional invariant subspace
\[V_i\subseteq\rset[x]\]
of $\partial_x$ is of the form $\rset[x]_{\leq d}$ for some $d\in\nset_0$.
To see this, let $V$ be an invariant subspace of $\partial_x$ with $\dim V < \infty$.
Let $p\in V$ be of highest degree, i.e.,
\[\deg q \leq \deg p =: d\]
for all $q\in V$ and hence
\[V\subseteq\rset[x]_{\leq d}.\]
Then
\[\partial_x^k p\in V \qquad\text{and}\qquad \deg (\partial_x^k p) = d - k\]
for all $k=1,\dots,d$.
Since $\deg (\partial_x^k p) = d-k$, the set
\[\left\{\partial_x^k p\right\}_{k=0}^d\]
is linearly independent and it spans $\rset[x]_{\leq d}$, i.e.,
\[\rset[x]_{\leq d}\subseteq V.\]
In summary,
\[V=\rset[x]_{\leq d}\]
for some $d\in\nset_0$.
\exmsymbol
\end{exm}

The operator $\partial_x$ is a Jordan block of infinite size.
Therefore, we can in general not hope for
\[\rset[x_1,\dots,x_n]\]
to be split into disjoint invariant subspaces $V_i$:
\begin{equation}\label{eq:decompo}
\rset[x_1,\dots,x_n] = V_1 \oplus V_2 \oplus \dots.
\end{equation}
Even when we allow
\[\dim V_i = \infty\]
in the decomposition (\ref{eq:decompo}), then \Cref{exm:jordan} shows that for $\partial_x$ only the trivial decomposition
\[\rset[x] = V_1\]
exists.

Additionally, for the invariant subspaces $V_i$ of $A\in\fg$, we have the following.

\begin{cor}
Let $n\in\nset$, let $A\in\fg$, and let
\[V\subseteq\rset[x_1,\dots,x_n]\]
be a finite or infinite dimensional subspace.
Then the following are equivalent:
\begin{enumerate}[(i)]
\item $AV\subseteq V$.

\item $e^{tA}V \subseteq V$ for all $t\in\rset$.
\end{enumerate}
\end{cor}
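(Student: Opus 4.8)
The implication (ii) $\Rightarrow$ (i) is immediate: if $e^{tA}V \subseteq V$ for all $t \in \rset$, then differentiating the curve $t \mapsto e^{tA}v$ at $t=0$ for $v \in V$ gives $Av = \frac{\diff}{\diff t}e^{tA}v\big|_{t=0} \in V$, since $V$ is a closed subspace in the LF-topology (every subspace here is a union of its finite-dimensional traces, hence sequentially closed, and the difference quotients $\frac{1}{t}(e^{tA}v - v)$ all live in a fixed finite-dimensional subspace by \Cref{thm:gCharac}(ii)). So the content is (i) $\Rightarrow$ (ii).

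\textbf{The main step.} Assume $AV \subseteq V$. The plan is to reduce to the finite-dimensional situation exactly as in \Cref{thm:gCharac}. Fix $v \in V$; it suffices to show $e^{tA}v \in V$ for all $t$. Write $v$ as a finite linear combination of monomials $x^\alpha$. For each such $\alpha$, \Cref{thm:gCharac}(i)$\Leftrightarrow$(ii) gives a degree bound $D(\alpha)$ with $A^k x^\alpha \in \rset[x_1,\dots,x_n]_{\leq D(\alpha)}$ for all $k$. Let $D$ be the maximum of these over the finitely many $\alpha$ appearing in $v$, so that $A^k v \in \rset[x_1,\dots,x_n]_{\leq D}$ for all $k \in \nset_0$. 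Now form the finite-dimensional space $W := \mathrm{span}\{A^k v : k \in \nset_0\} \subseteq \rset[x_1,\dots,x_n]_{\leq D}$. Since $AV \subseteq V$ and $v \in V$, an easy induction gives $A^k v \in V$ for all $k$, hence $W \subseteq V$. Also $AW \subseteq W$ by construction. On the finite-dimensional $A$-invariant space $W$ the exponential $e^{tA}|_W$ is the ordinary matrix exponential, so $e^{tA}v = \sum_{k} \frac{t^k}{k!}A^k v$ is a convergent series in $W$ (the sum is over a finite-dimensional space and the partial sums stabilize in degree $\leq D$), whence $e^{tA}v \in W \subseteq V$. Since $v \in V$ was arbitrary, $e^{tA}V \subseteq V$ for all $t \in \rset$.

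\textbf{Expected obstacle.} There is essentially no obstacle: the entire argument is a bookkeeping exercise built on \Cref{thm:gCharac}. The only point requiring a word of care is the claim that the arbitrary subspace $V$ (possibly infinite-dimensional) is closed enough that the limit $e^{tA}v = \lim_N \sum_{k=0}^N \frac{t^k}{k!}A^k v$ lands back in $V$; this is handled not by any topological completeness of $V$ but by the observation above that all partial sums already lie in the \emph{finite-dimensional} subspace $W \subseteq V$, so the limit is taken within $W$ and the issue evaporates. I would state this explicitly to avoid any appeal to closedness of $V$ in the LF-topology.
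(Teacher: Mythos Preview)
Your proposal is correct and follows essentially the same approach as the paper: the power series $e^{tA}=\sum_k \frac{t^k}{k!}A^k$ for (i) $\Rightarrow$ (ii) and differentiation $A=\partial_t e^{tA}|_{t=0}$ for (ii) $\Rightarrow$ (i). The paper's proof is a terse two-liner that leaves implicit exactly the point you take care to justify---namely, that the partial sums and difference quotients live in a fixed finite-dimensional subspace $W\subseteq V$ (via \Cref{thm:gCharac}(ii)) so that no appeal to closedness of $V$ is needed; your expanded treatment makes this explicit and is a worthwhile clarification.
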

\begin{proof}
The direction ``(i) $\Rightarrow$ (ii)'' follows from
\[e^{tA} = \sum_{k\in\nset_0} \frac{1}{k!}\cdot t^k\cdot A^k\]
by \Cref{cor:taylor}.
The direction ``(ii) $\Rightarrow$ (i)'' follows from
\[A = \partial_t e^{tA}\big|_{t=0}.\qedhere\]
\end{proof}

We have seen in \Cref{exm:ABshiftEvenOdd} that $\fg$ is not closed under addition.
The following example shows that $\fg$ is also not closed under multiplication.

\begin{exm}[\Cref{exm:ABshiftEvenOdd} continued]\label{exm:ABshiftEvenOdd2}
Let
\[A,B:\rset[x]\to\rset[x]\]
be given as in \Cref{exm:ABshiftEvenOdd}.
Then
\begin{equation}\label{eq:AB}
AB x^{2m} = x^{2m+2},\qquad ABx^{2m+1} = x^{2m+2},
\end{equation}
and
\begin{equation}\label{eq:BA}
BAx^{2m} = x^{2m+1},\qquad BA x^{2m+1} = x^{2m+3},
\end{equation}
for all $m\in\nset_0$, i.e., by \Cref{thm:gCharac} (ii), $AB\notin\fg$ as well as $BA\notin\fg$.
\exmsymbol
\end{exm}

If two operators $A,B\in\fg$ possess a common family
\[\{V_i\}_{i\in\nset_0}\]
of invariant and finite dimensional subspace $V_i$ as provided by \Cref{thm:gCharac} (iv), then
\[A+B\in\fg, \quad AB\in\fg,\quad\text{and}\quad BA\in\fg\]
as well as the Lie bracket $[\,\cdot\,,\,\cdot\,]$ is given by
\[[A,B] := AB - BA\ \in\fg.\]

In general, $\fg$ is not closed under this Lie bracket.
The operators $A,B\in\fg$ in \Cref{exm:ABshiftEvenOdd} provide an example of $[A,B]\notin\fg$.

\begin{exm}[Examples \ref{exm:ABshiftEvenOdd} and \ref{exm:ABshiftEvenOdd2} continued]\label{exm:ABshiftEvenOdd3}
Let
\[A:\rset[x]\to\rset[x] \qquad\text{and}\qquad B:\rset[x]\to\rset[x]\]
be defined as in \Cref{exm:ABshiftEvenOdd}.
By (\ref{eq:AB}) and (\ref{eq:BA}) in \Cref{exm:ABshiftEvenOdd2},
\[[A,B]x^{2m} = x^{2m+2} - x^{2m+1} \qquad\text{and}\qquad [A,B]x^{2m+1} = x^{2m+2} - x^{2m+3}\]
for all $m\in\nset_0$, i.e., $[A,B]\notin\fg$ by \Cref{thm:gCharac} (ii).
\exmsymbol
\end{exm}

In summary, for $A,B\in\fg$, in order to have
\[A+B\in\fg,\quad AB\in\fg,\quad BA\in\fg, \quad\text{and}\quad [A,B]\in\fg\]
it is sufficient that the operators $A$ and $B$ possess a common family
\[\cV=\{V_i\}_{i\in\nset_0}\]
of invariant finite dimensional subspaces $V_i$ in \Cref{thm:gCharac} (iv).

\begin{cor}\label{cor:LieAlgInvSubSpaces}
Let $n\in\nset$ and let
\[\cV := \{V_i\}_{i\in\nset_0}\]
be a family of subspaces $V_i\subseteq\rset[x_1,\dots,x_n]$ such that
\[\rset[x_1,\dots,x_n]=\bigcup_{i\in\nset_0} V_i \qquad\text{and}\qquad \dim V_i < \infty\]
for all $i\in\nset_0$.
Then
\[\fg_\cV := \big\{A\in\fg \,\big|\, AV_i\subseteq V_i\ \text{for all}\ i\in\nset_0\big\}\]
is a regular Fr\'echet Lie algebra in $\fg$ with Lie bracket
\[[\,\cdot\,,\,\cdot\,]:\fg_\cV\times\fg_\cV\to\fg_\cV,\quad (A,B)\mapsto [A,B] := AB - BA.\]
The set $G_\cV := \exp(\fg_\cV)$ is the corresponding regular Fr\'echet Lie group.
\end{cor}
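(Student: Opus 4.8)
The plan is to present $\fg_\cV$ as a countable projective limit of finite-dimensional Lie algebras and $G_\cV$ as the associated projective-limit Lie group, and then to invoke the classical regularity of such limits.

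\textbf{The Lie algebra structure.} First I would record that $\fg_\cV$ is closed under the relevant operations: if $A,B\in\fg_\cV$ and $\lambda\in\rset$, then $A+B$, $\lambda A$, $AB$ and $[A,B]=AB-BA$ all preserve every $V_i$ (for the bracket, $[A,B]V_i\subseteq AV_i+BV_i\subseteq V_i$), and any linear operator preserving a countable family of finite-dimensional subspaces that cover $\rset[x_1,\dots,x_n]$ lies in $\fg$ by \Cref{thm:gCharac}(iii). In particular $[\,\cdot\,,\,\cdot\,]$ is a well-defined bracket on $\fg_\cV$. This is essentially the discussion preceding the corollary.

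\textbf{The Fr\'echet topology.} Next I would reduce to a directed index set by replacing $\cV$ with the family of all finite sums $\sum_{j\in F}V_j$, which is still countable, covers $\rset[x_1,\dots,x_n]$, is directed under inclusion, and yields the same $\fg_\cV$ because each $V_i$ is itself such a sum. Writing this family as $\{V_i\}_{i\in I}$ with $i\le j\Rightarrow V_i\subseteq V_j$, set $\fp_i:=\{T\in\mathfrak{gl}(V_i)\mid TV_k\subseteq V_k\text{ for all }k\le i\}$, a Lie subalgebra of the finite-dimensional Lie algebra $\mathfrak{gl}(V_i)$ of linear operators on $V_i$, together with the restriction homomorphisms $\fp_j\to\fp_i$ for $i\le j$. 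The assignment $A\mapsto(A|_{V_i})_{i\in I}$ then identifies $\fg_\cV$ with $\varprojlim_i\fp_i$: it is injective because $\bigcup_iV_i=\rset[x_1,\dots,x_n]$, and surjective because a compatible tuple $(T_i)_i$ glues to a well-defined linear operator on $\rset[x_1,\dots,x_n]$ preserving every $V_i$, hence lying in $\fg_\cV$. A countable projective limit of finite-dimensional Lie algebras is a closed subspace of the Fr\'echet space $\prod_i\fp_i$ and therefore Fr\'echet, and the bracket is continuous because it is continuous in each (finite-dimensional) component; thus $\fg_\cV$ is a Fr\'echet Lie algebra inside $\fg$.

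\textbf{The group.} For $A\in\fg_\cV$ and $t\in\rset$ the operator $e^{tA}$ preserves every $V_i$ with $e^{tA}|_{V_i}=e^{t(A|_{V_i})}$ the matrix exponential. Let $P_i\le GL(V_i)$ be the connected Lie subgroup with Lie algebra $\fp_i$, with restriction homomorphisms $P_j\to P_i$ for $i\le j$, and put $G_\cV:=\varprojlim_iP_i$; this is the subgroup of $GL(\rset[x_1,\dots,x_n])$ generated by $\exp(\fg_\cV)$ (each such generator, and hence its inverse, preserving every $V_i$). By the standard theory of countable projective limits of finite-dimensional Lie groups \cite{omori97,schmed23}, the manifold, group and regularity axioms (i)--(vi) of \Cref{dfn:regFLieGroup} pass componentwise to $G_\cV$, so $G_\cV$ is a regular Fr\'echet Lie group whose Lie algebra is $\varprojlim_i\fp_i=\fg_\cV$ and whose exponential map is the limit of the matrix exponentials $\fp_i\to P_i$; in particular $\exp:\fg_\cV\to G_\cV$ is smooth with $\frac{\diff}{\diff t}\exp(tA)\big|_{t=0}=A$ and $T_eG_\cV\cong\fg_\cV$.

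\textbf{Expected obstacle.} The linear algebra above is routine; the two delicate points are that the $V_i$ are not assumed nested --- which is why one first passes to the directed family of finite sums and checks that this changes neither $\fg_\cV$ nor the covering property --- and that, already in finite dimensions, $\exp$ need not surject onto a subgroup, so that the statement's ``$G_\cV:=\exp(\fg_\cV)$'' should be understood as the regular Fr\'echet Lie group generated by $\exp(\fg_\cV)$, namely $\varprojlim_iP_i$. I expect the genuinely technical step to be axiom (vi) of \Cref{dfn:regFLieGroup} --- that $C^1(G_\cV,\fg_\cV)$ coincides with the set of all $C^1$-curves in $G_\cV$ under the Fr\'echet topology --- which reduces to the finite-dimensional case together with an interchange-of-limits argument over the countable system; this is standard but not purely formal, and the possible non-surjectivity of the connecting maps $P_j\to P_i$ causes no trouble, since one may pass to their images.
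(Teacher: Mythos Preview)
The paper gives no proof for this corollary at all: it is stated immediately after the observation that operators sharing a common family $\{V_i\}$ of finite-dimensional invariant subspaces have $A+B,AB,BA,[A,B]\in\fg$, and the regular-Fr\'echet structure is implicitly delegated to the references \cite{omori97,schmed23} cited in \Cref{sec:prelim}. So there is nothing to compare your argument against line by line.

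Your route via a countable projective limit of finite-dimensional Lie algebras and Lie groups is a correct and natural way to actually establish the statement, and it supplies the details the paper omits. The preliminary step of passing from $\{V_i\}$ to the directed family of finite sums is a good catch --- the paper's hypotheses do not assume the $V_i$ are nested --- and your check that this leaves $\fg_\cV$ unchanged is sound. Your identification $\fg_\cV\cong\varprojlim_i\fp_i$ is correct, and the appeal to the standard regularity of countable projective limits of finite-dimensional Lie groups is exactly the kind of reference the paper is implicitly relying on.

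Your ``expected obstacle'' paragraph also flags a genuine imprecision in the paper's formulation: already in finite dimensions $\exp$ need not be surjective onto a connected Lie group, so the literal set $\exp(\fg_\cV)$ need not be a group. Reading $G_\cV$ as the group generated by $\exp(\fg_\cV)$, or equivalently as your projective limit $\varprojlim_iP_i$, is the right fix. The one place where your wording could be tightened is the claim that $\varprojlim_iP_i$ \emph{equals} the subgroup generated by $\exp(\fg_\cV)$; this is true here but deserves a sentence (connectedness of the $P_i$ plus the fact that product integrals/evolution operators in the limit are approximated by finite products of exponentials).
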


The question is, are all regular Fr\'echet Lie algebras in $\fg$ contained in some $\fg_\cV$?
For finite dimensional Lie algebras in $\fg$ this is true.

\begin{cor}\label{cor:subspaceLieAlgebras}
Let $n\in\nset$ and let $\tilde{\fg}$ be a finite dimensional regular Fr\'echet Lie algebra in $\fg$.
Then there exists a family
\[\cV = \{V_i\}_{i\in\nset_0}\]
of finite dimensional subspaces $V_i$ of $\rset[x_1,\dots,x_n]$ with
\[\rset[x_1,\dots,x_n] = \bigcup_{i\in\nset_0} V_i\]
such that $\tilde{\fg}\subseteq\fg_\cV$.
\end{cor}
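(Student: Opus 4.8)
The plan is to construct, for every monomial $x^\alpha$, the smallest $\tilde{\fg}$-invariant subspace $W_\alpha$ of $\rset[x_1,\dots,x_n]$ containing $x^\alpha$ and to prove $\dim W_\alpha<\infty$. Fix a basis $E_1,\dots,E_M$ of $\tilde{\fg}$; since $\tilde{\fg}$ is a Lie algebra, $[E_i,E_j]=\sum_kc_{ij}^kE_k$. Let $W_\alpha$ be the span of all ``words'' $E_{i_1}E_{i_2}\cdots E_{i_m}x^\alpha$ with $m\ge 0$ and $i_1,\dots,i_m\in\{1,\dots,M\}$. Once $\dim W_\alpha<\infty$ is known, one enumerates $\nset_0^n=\{\alpha^{(0)},\alpha^{(1)},\dots\}$, sets $V_i:=W_{\alpha^{(0)}}+\dots+W_{\alpha^{(i)}}$, and notes that $\cV=\{V_i\}_{i\in\nset_0}$ is a family of finite dimensional subspaces with $\bigcup_iV_i=\rset[x_1,\dots,x_n]$; each $V_i$ is $\tilde{\fg}$-invariant because applying an $E_l$ turns a word into a longer word, and since $\tilde{\fg}\subseteq\fg$ by hypothesis this gives $\tilde{\fg}\subseteq\fg_\cV$, as required.

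First I would prove a uniform degree bound for powers: for fixed $\alpha$,
\[d(\alpha):=\sup_{B\in\tilde{\fg}}\ \sup_{m\in\nset_0}\ \deg_x\big(B^mx^\alpha\big)\ <\ \infty .\]
Writing $B=\sum_ib_iE_i$, the $x^\gamma$-coefficient of $B^mx^\alpha$ is for each $m$ a polynomial in $b=(b_1,\dots,b_M)\in\rset^M$; hence $E_d:=\{b\in\rset^M:\deg_x(B^mx^\alpha)\le d\ \text{for all}\ m\in\nset_0\}$ is the common zero set of countably many polynomials, so by the Hilbert basis theorem a real algebraic subvariety of $\rset^M$. By \Cref{thm:gCharac}\,(ii), every $B\in\tilde{\fg}\subseteq\fg$ lies in some $E_d$, i.e.\ $\rset^M=\bigcup_{d\in\nset_0}E_d$. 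Since $\rset^M$ is a Baire space and a proper real algebraic subvariety has empty interior, some $E_d$ equals all of $\rset^M$; this $d$ is the desired bound $d(\alpha)$. Finite dimensionality of $\tilde{\fg}$ is essential at this point.

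The second step passes from bounded powers to bounded words, and this is the step I expect to be the crux. Let $\phi(B_1,\dots,B_m):=\frac{1}{m!}\sum_{\sigma\in S_m}B_{\sigma(1)}\cdots B_{\sigma(m)}$, a symmetric multilinear map with $\phi(B,\dots,B)=B^m$. Polarization writes $\phi(E_{i_1},\dots,E_{i_m})$ as a universal linear combination of operators $\big(\sum_{l\in S}E_{i_l}\big)^m$ with $\emptyset\ne S\subseteq\{1,\dots,m\}$, each summand $\sum_{l\in S}E_{i_l}$ lying in $\tilde{\fg}$; hence $\deg_x\big(\phi(E_{i_1},\dots,E_{i_m})x^\alpha\big)\le d(\alpha)$. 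On the other hand, transposing two adjacent letters in a word of length $m$ changes it, through $E_aE_b-E_bE_a=\sum_kc_{ab}^kE_k$, by a linear combination of words of length $m-1$; averaging over all orderings shows that any word of length $m$ equals $\phi$ of its multiset plus a linear combination of words of length $\le m-1$. An induction on $m$ (with $m=0$ giving $\deg_xx^\alpha=|\alpha|\le d(\alpha)$) then yields $\deg_x\big(E_{i_1}\cdots E_{i_m}x^\alpha\big)\le d(\alpha)$ for all words, so $W_\alpha\subseteq\rset[x_1,\dots,x_n]_{\le d(\alpha)}$ is finite dimensional.

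The hard part is thus this second step: merely knowing $\sup_m\deg_xB^mx^\alpha<\infty$ for each individual $B\in\tilde{\fg}$ does not by itself bound a noncommutative monomial $E_{i_1}\cdots E_{i_m}x^\alpha$, since the different orderings may cancel; it is exactly the bracket relation $[\tilde{\fg},\tilde{\fg}]\subseteq\tilde{\fg}$, used through the PBW-type reordering, that repairs this. Dropping the Lie algebra hypothesis makes the conclusion fail, as \Cref{exm:ABshiftEvenOdd} together with \Cref{exm:ABshiftEvenOdd2} demonstrates.
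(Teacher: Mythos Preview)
Your argument is correct, and its outline matches the paper's two-step plan --- first a uniform-in-$B\in\tilde{\fg}$ degree bound for $B^kx^\alpha$, then the chain construction of invariant subspaces as in \Cref{thm:gCharac} --- but you supply considerably more at both steps. For the uniform bound the paper only invokes continuity of $(a_1g_1+\dots+a_Ng_N)^k$ in $a$ and compactness of the sphere $\sum a_j^2=1$; your Baire/variety argument (the sets $E_d$ are Zariski closed, exhaust $\rset^M$, and a proper real variety has empty interior) makes this step rigorous. More interestingly, the paper then writes ``continue as in the proof of \Cref{thm:gCharac}'', but that construction builds $V_{\alpha,i+1}=V_{\alpha,i}+AV_{\alpha,i}$ for a \emph{single} operator, whereas here the chain $V_{\alpha,i+1}=V_{\alpha,i}+\sum_j g_jV_{\alpha,i}$ contains all words $g_{j_1}\cdots g_{j_m}x^\alpha$, and only powers $B^m x^\alpha$ have been bounded. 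Your polarization-plus-commutator reduction (word $=$ symmetrized word $+$ shorter words, and symmetrized word $=$ linear combination of $m$-th powers of elements of $\tilde{\fg}$) is exactly what closes this gap; it is the Lie-algebra hypothesis $[E_i,E_j]\in\tilde{\fg}$ that makes the reduction work, which your final remark pinpoints correctly.
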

\begin{proof}
Let $g_1,\dots,g_N$ be a vector space basis of $\tilde{\fg}$, i.e.,
\[\dim \tilde{\fg}=:N\in\nset.\]
Let
\begin{equation}\label{eq:a1aN}
a_1,\dots,a_N\in\rset \quad\text{with}\quad a_1^2 + \dots + a_N^2 = 1.
\end{equation}
Then for any $\alpha\in\nset_0^n$ there exists a $D = D(\alpha)\in\nset$ such that
\[(a_1 g_1 + \dots + a_N g_N)^k x^\alpha \quad\subseteq\quad \rset[x_1,\dots,x_n]_{\leq D}\]
for all $a_1,\dots,a_N$ with (\ref{eq:a1aN}), since
\[(a_1 g_1 + \dots + a_N g_N)^k\]
is  continuous in $a_1,\dots,a_N$ and the set (\ref{eq:a1aN}) is compact.
From here, continue as in the proof of \Cref{thm:gCharac} step ``[(i) $\Leftrightarrow$ (ii)] $\Rightarrow$ (iv)''.
\end{proof}

The reader will of course immediately see that in the proof of the previous corollary we did not need that the finite dimensional Lie algebra $\tilde{\fg}$ is closed with respect to the Lie bracket $[\,\cdot\,,\,\cdot\,]$.
Only that $\tilde{\fg}$ is a finite dimensional vector space was used.
Hence, we have the following immediate consequence.

\begin{cor}
Let $n\in\nset$ and $A,B\in\fg$.
Then the following are equivalent:
\begin{enumerate}[(i)]
\item $\alpha A + \beta B\in\fg$ for all $\alpha,\beta\in\rset$.

\item There exists a family
\[\cV = \{V_i\}_{i\in\nset_0}\]
of subspaces
$V_i\subseteq\rset[x_1,\dots,x_n]$
such that
\[\rset[x_1,\dots,x_n] = \bigcup_{i\in\nset_0} V_i \qquad\text{and}\qquad \dim V_i <\infty\]
for all $i\in\nset_0$ with $A,B\in\fg_\cV$.
\end{enumerate}
\end{cor}

In summary, we described all finite dimensional Lie algebras in $\fg$, i.e., they are always contained in some $\fg_\cV$.
$\fg_\cV$ is infinite dimensional.
But an infinite dimensional regular Fr\'echet Lie algebra of $\fg$ does not need to be contained in some $\fg_\cV$.
The following is an example of such an infinite dimensional Lie algebra.

\begin{exm}\label{exm:nonSubspaceLieAlgebra}
Let $n\in\nset$, let $y\in\rset^n$, and let
\[l_y: \rset[x_1,\dots,x_n]\to\rset,\qquad f\mapsto l_y(f) := f(y)\]
be the point evaluation at $y$.
Define
\[\fg_y := \big\{l_y\cdot p \,\big|\, p\in\rset[x_1,\dots,x_n]\big\}.\]
Then
\begin{enumerate}[(i)]
\item $\fg_y\subseteq\fg$,
\item $\alpha A+\beta B\in\fg_y$ for all $A,B\in\fg_y$ and $\alpha,\beta\in\rset$,
\item $AB\in\fg_y$ for all $A,B\in\fg_y$,
\item $[A,B]\in\fg_y$ for all $A,B\in\fg_y$, and
\item $\fg_y$ is closed,
\end{enumerate}
i.e., $\fg_y$ is a regular Fr\'echet Lie algebra in $\fg$, since
\[\alpha A+\beta B=l_y\cdot(\alpha p+\beta q)\qquad\text{and}\qquad AB=q(y)\cdot l_y\cdot p\]
with $Af = f(y)\cdot p$, $Bf = f(y)\cdot q$, and $\alpha,\beta\in\rset$.
To see that $\fg_y$ is closed let
\[A_k = l_y\cdot p_k \qquad\text{with}\qquad A_k\xrightarrow{k\to\infty} \big[A:\rset[x_1,\dots,x_n]\to\rset[x_1,\dots,x_n]\big].\]
Hence,
\[A_k1 = p_k\to p\in\rset[x_1,\dots,x_n]\]
as $k\to\infty$ and $A=l_y\cdot p\in\fg_y$.
But since $\deg p$ in $A = l_y\cdot p$ is not bounded, $\fg_y$ is not contained in any $\fg_\cV$.
\exmsymbol
\end{exm}

In \Cref{dfn:frechetLieGroup} we have seen what a regular Fr\'echet Lie group and its Lie algebra are.
The Examples \ref{exm:ABshiftEvenOdd}, \ref{exm:ABshiftEvenOdd2}, and \ref{exm:ABshiftEvenOdd3} show that
$\fg$
is not a (regular Fr\'echet) Lie algebra of any regular Fr\'echet Lie group $G$.
It is too large and it is not even a vector space.

However, since $\fg$ contains all operators $A$ such that
\[e^{tA}:\rset[x_1,\dots,x_n]\to\rset[x_1,\dots,x_n]\]
for all $t\in\rset$, $\fg$ contains all regular Fr\'echet Lie algebras in our case.
Hence, $\fg$ is a simple toy example to further study regular Fr\'echet Lie groups and their algebras.

\section{Three-Line-Proof of Stochel's Theorem}
\label{sec:stochel}

Since we are interested in $K$-positivity preservering semi-groups and these are connected to $K$-moment sequences by \Cref{thm:KposPres}, we deal in this section with $K$-moment sequences.

Recall from \Cref{dfn:riesz}, for a real sequence $s = (s_\alpha)_{\alpha\in\nset_0^n}$ and a polynomial
\[p(x) = \sum_{\alpha\in\nset_0^n} p_\alpha\cdot x^\alpha \quad\in\rset[x_1,\dots,x_n],\]
the \emph{Riesz functional}
\[L_s:\rset[x_1,\dots,x_n]\to\rset\]
is defined by
\[L_s(p) = \sum_{\alpha\in\nset_0^n} p_\alpha\cdot s_\alpha.\]
We get the following three-line-proof of Stochel's Theorem \cite{stochel01}.

\begin{thm}\label{thm:stochel}
Let $n\in\nset$ and $K\subseteq\rset^n$ closed.
The following are equivalent:
\begin{enumerate}[(i)]
\item $s$ is a $K$-moment sequence.

\item For all $d\in\nset_0$,
\[s|_{\leq d} := (s_\alpha)_{\alpha\in\nset_0^n:|\alpha|\leq d}\]
is a truncated $K$-moment sequence.
\end{enumerate}
\end{thm}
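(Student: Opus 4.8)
The plan is to reduce both implications to Haviland's theorem, which for closed $K\subseteq\rset^n$ characterizes $K$-moment sequences as exactly those $s$ whose Riesz functional $L_s$ satisfies $L_s(p)\geq 0$ for all $p\in\pos(K)$. The implication (i) $\Rightarrow$ (ii) is immediate: if $\mu$ is a representing measure for $s$ supported on $K$, then the very same $\mu$ represents each truncation $s|_{\leq d}$, so $s|_{\leq d}$ is a truncated $K$-moment sequence for every $d\in\nset_0$.

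For the converse (ii) $\Rightarrow$ (i) I would verify the Haviland condition directly. Fix an arbitrary $p\in\pos(K)$ and set $d:=\deg p$. By (ii), the truncation $s|_{\leq d}$ is a truncated $K$-moment sequence, so there is a positive measure $\mu_d$ supported on $K$ with $s_\alpha = \int_K x^\alpha~\diff\mu_d$ for all $|\alpha|\leq d$. Since $p$ has degree at most $d$, in the expansion $L_s(p) = \sum_\alpha p_\alpha s_\alpha$ only these $s_\alpha$ occur, hence $L_s(p) = \int_K p(x)~\diff\mu_d(x)\geq 0$ because $p\geq 0$ on $K$. As $p\in\pos(K)$ was arbitrary, Haviland's theorem produces a single measure $\mu$ on $K$ with $s_\alpha = \int_K x^\alpha~\diff\mu$ for all $\alpha\in\nset_0^n$, i.e.\ $s$ is a $K$-moment sequence.

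There is essentially no obstacle here beyond invoking Haviland's duality between $\pos(K)$ and $K$-moment sequences; the one substantive observation is that every polynomial has finite degree and is therefore already ``tested'' by a single finite truncation of $s$, which is exactly what collapses the argument to three lines. The only mild point of care is to make sure the notion of \emph{truncated $K$-moment sequence} being used carries its representing measure on $K$ (not merely on some larger set), since that is what lets the degree-$d$ measure $\mu_d$ feed into the Haviland criterion for $\pos(K)$.
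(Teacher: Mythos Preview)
Your proof is correct and follows exactly the paper's approach: the forward implication is trivial, and the reverse reduces to Haviland's theorem via the observation $L_s(p)=L_{s|_{\leq d}}(p)\geq 0$ for $d=\deg p$. The paper's three-line version simply compresses your second paragraph into the chain $L_s(p)=L_{s|_{\leq d}}(p)\geq 0$ without spelling out the representing measure $\mu_d$.
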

\begin{proof}
(i) $\Rightarrow$ (ii): Clear.

(ii) $\Rightarrow$ (i): For $p\in\pos(K)$, $L_s(p) \overset{\deg p \leq d}{=} L_{s|_{\leq d}}(p) \overset{\text{(ii)}}{\geq} 0$ and, by Haviland's Theorem \cite{havila36}, $s$ is a $K$-moment sequence.
\end{proof}

Note, in (ii) actually a weaker property is used:
\begin{enumerate}[(ii')]
\item For all $d\in\nset_0$ and $p\in\pos(K)_{\leq d}$, $L_{s|_\leq d}(p)\geq 0$.
\end{enumerate}
The truncated sequences $s|_{\leq d}$ do not need to be truncated $K$-moment sequences.

\begin{rem}
This three-line-proof was remarked by the author at the LAW25 conference.
Jan Stochel was present in the lecture hall when the author made this remark.
In the afternoon after this remark, Jan Stochel and Zenon Jablonski took the author aside and sat down on a bench in front of the Fakulteta za pomorstvo in promet Univerze v Ljubljani where the conference took place.
The author, surrounded by Jan Stochel sitting on the right and Zenon Jablonski on the left, wrote down this three-line-proof.
The original notes are shown in \Cref{fig:threelineproof}.
\begin{figure}[tbh!]\centering
\includegraphics[width=0.75\textwidth]{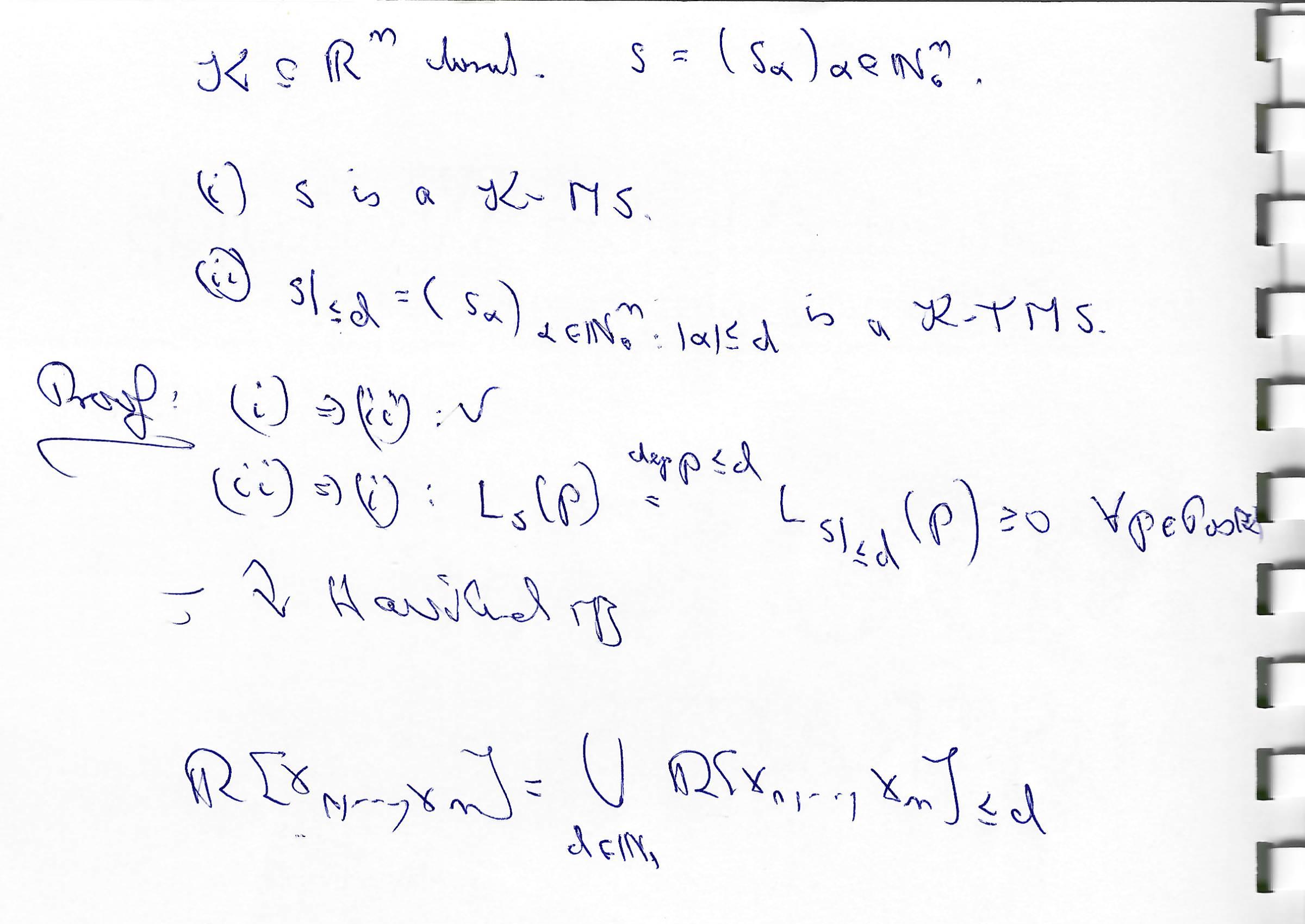}
\caption{The original handwritten notes by the author of the three-line-proof of Stochel's Theorem \cite{stochel01}, presented to Jan Stochel and Zenon Jablonski one afternoon at the LAW25 conference in front of the Fakulteta za pomorstvo in promet Univerze v Ljubljani.\label{fig:threelineproof}}
\end{figure}
While the author pointed out, that this three-line-proof must have been known before and that he was possibly not the first to notice that, Jan Stochel and Zenon Jablonski where not aware of this three-line-proof but immediately saw, that the weaker condition (ii') is enough and that the theorem can be extended to graded algebras as long as an equivalent version of Haviland's Theorem is known there, since the proof is based on the simple observations
\[L_s(p) \overset{\deg p \leq d}{=} L_{s|{_\leq d}}(p)\]
and
\[\rset[x_1,\dots,x_n] = \bigcup_{d\in\nset_0} \rset[x_1,\dots,x_n]_{\leq d}.\]
Both encouraged the author to publish this proof.
Hence, it is included here.
The original proof \cite{stochel01} also extends to $*$-semi-groups, where an analogue of Haviland's Theorem is available.

Personally, I want to express my gratidute to both, Jan Stochel and Zenon Jablonski, for their kindness, openness, and impartiality to listen to me and to let me show this proof to them.
While I was advised several times ``not to fuck with the big guys'' since they can hurt my career immensely, I am very happy to be able to meet Jan Stochel and Zenon Jablonski.
It is refreshing that there are (still) mathematicians out there, who value good arguments and efficient proofs more than personal approval.
I am very happy to personally meet them.
\exmsymbol
\end{rem}

\section{Generators of $K$-Positivity Preserving Semi-Groups}
\label{sec:KposGen}

We characterized $\fg$ (\Cref{thm:gCharac}) and how to work with generators $A\in\fg$ (\Cref{cor:taylor}).
We have also seen that in applications Markov processes come from generators $A\in\fg$ with (\ref{eq:invariantDegree}) and that the semi-groups
\[\left( e^{tA}\right)_{t\geq 0}\]
are $K$-positivity preseving \cite{cuchie12}.
Hence, we continue our investigations from \cite{didio24posPresConst} and \cite{didio25KPosPresGen} on generators of $K$-positivity preserving semi-groups.
At first, let us have a well-known example.
It is a group, not just a semi-group.

\begin{exm}
Let
\[A := x\partial_y - y\partial_x.\]
Then $A$ generates the rotation group on the polynomials $\rset[x,y]$ and is therefore a generator of a $K$-positivity preservering group $(e^{tA})_{t\in\rset}$ for any ball
\[K := B_r(0)\]
centered at $0\in\rset^2$ with radius $r\geq 0$.
\exmsymbol
\end{exm}
\begin{proof}
That $y\partial_x - x\partial_y$ is the generator of the rotation is the same calculation as on $L^2(\rset^2,\rset)$.
\end{proof}

The second example is non-trivial and it was open for several years.
We solve it for the first time and we need the techniques from the previous sections.

\begin{exm}\label{exm:heat-11}
Let
\[A := (1-x^2)\cdot\partial_x^2.\]
Then
\[e^{tA}\pos([-1,1,]) \;\subseteq\; \pos([-1,1])\]
for all $t\geq 0$, i.e.,
\[(1-x^2)\cdot\partial_x^2\in\fg\]
is a generator of a $[-1,1]$-positivity preserving semi-group.
\exmsymbol
\end{exm}
\begin{proof}
Let $d\in\nset_0$.
Then
\[A\rset[x]_{\leq d}\subseteq\rset[x]_{\leq d},\]
i.e., $A\in\fg$.
Additionally,
\[g: [0,\infty)\times [-1,1]\times [-1,1]\times \pos([-1,1])\to\rset\]
defined by
\[g(t,x,y,p) := \left( e^{tA_y} p\right)(x)\]
is continuous in $(t,x,y,p)$.

Let $\cB$ be a compact base of $\pos([-1,1])_{\leq d}$.
For every $y\in [-1,1]$ and $p\in\cB$, there exists a maximal $\tau_{y,p}\in (0,\infty]$ such that
\[\left( e^{tA_y}p \right)(y) \geq 0\]
for all $t\in [0,\tau_{y,p}]$.
To see this, we have the following three cases:
\begin{enumerate}[\quad (a)]
\item $y\in (-1,1)$ and $p(y)>0$: Clear by continuity of $p$ and the Gaussian kernel convolution $e^{tA_y}$.

\item $y\in (-1,1)$ and $p(y) = 0$: Since $p$ has only isolated zeros, $p(x)>0$ in a neighborhood of $y$ with $x\neq y$, continuity of $p$ and the Gaussian kernel convolution give the statement.

\item $y = \pm 1$: $A_{\pm 1} = 0$ and hence
\[(e^{tA_y} p)(y) = p(y) = (e^{tA} p)(y).\]
The last equality follows from the Taylor expansion (\Cref{cor:taylor}), since
\[A^k = (1-x^2)\cdot\partial_x^2 \cdot{\ldots}\]
for all $k\in\nset$ and hence all $A^k$ with $k\geq 1$ vanish at $x=y=\pm 1$.
\end{enumerate}
Hence,
\[\tau_d := \inf_{y\in [-1,1], p\in\cB} \tau_{y,p} > 0\]
and therefore
\[e^{tA_y}\pos([-1,1])_{\leq d} \subseteq \pos([-1,1])_{\leq d}\]
for all $t\in [0,\tau_d]$ with $\tau_d>0$ and $y\in [-1,1]$.

For a function $f$ in the variables $x$ and $y$ in the same domain, define
\[(\Ev_{y=x} f)(x) := f(x,x).\]
Hence, for $p\in\pos([-1,1])_{\leq d}$ and $t\in [0,\infty)$, since
\[\frac{t}{k}\in (0,\tau_d]\]
for $k\gg 1$,
\begin{align*}
e^{tA}p &= \lim_{k\to\infty} \left( 1 + \frac{tA}{k} \right)^k p\\
&= \lim_{k\to\infty} \left( \Ev_{y=x}\circ \left( 1 + \frac{tA_y}{k} \right)\right)^k p\\
&= \lim_{k\to\infty} \left( \Ev_{y=x}\circ e^{t A_y/k} \right)^k p \qquad\quad\in \pos([-1,1])_{\leq d}.
\end{align*}
Since $p\in\pos([-1,1])_{\leq d}$ and $d\in\nset_0$ were arbitrary, the statement is proved.
\end{proof}

We have the following resolvent description of all generators $A\in\fg$ of $K$-positivity preserving semi-groups for any closed $K\subseteq\rset^n$.

\begin{thm}
Let $n\in\nset$, let $K\subseteq\rset^n$ be closed, and let $A\in\fg$ with the family
\[\cV = \{V_i\}_{i\in\nset_0}\]
of finite dimensional invariant subspaces $V_i\subseteq\rset[x_1,\dots,x_n]$ which cover all $\rset[x_1,\dots,x_n]$ by \Cref{thm:gCharac} (iv).
Then the following are equivalent:
\begin{enumerate}[(i)]
\item $(e^{tA})_{t\geq 0}$ is a $K$-positivity preserving semi-group, i.e.,
\[e^{tA}\pos(K)\subseteq\pos(K)\]
for all $t\geq 0$.

\item For each $i\in\nset_0$, there exists an $\varepsilon_i>0$ such that
\[ (\one - \lambda A)^{-1} (\pos(K)\cap V_i)\ \subseteq\ \pos(K)\cap V_i\]
for all $\lambda\in [0,\varepsilon_i)$.
\end{enumerate}
\end{thm}

The proof is an adaption of the proof of \cite[Prop.\ 6.2]{didio25KPosPresGen} and also an adaption of \cite[Prop.\ 2.3]{ouhabaz05}.

\begin{proof}
(i) $\Rightarrow$ (ii): 
Let $i\in\nset_0$, $p\in \pos(K)\cap V_i$, and let $\|\,\cdot\,\|_i$ be a norm on $V_i$ and also the corresponding operator norm on $V_i$.
For $\lambda > \|A|_{V_i}\|_i$ and all $t\geq 0$,
\[ \left\| e^{-\gamma t}\cdot e^{t A} p\right\|_i \leq \|p\|_i\cdot e^{(\|A|_{V_i}\|_i - \gamma)\cdot t}\]
and hence
\[ \gamma\cdot(\gamma\cdot\one - A)^{-1} p = \gamma\cdot \int_0^\infty e^{-\gamma t}\cdot e^{tA} p~\diff t.\]
Therefore, since
\[\gamma\cdot \int_0^\infty e^{-\gamma t}~\diff t = 1\]
and $e^{-\gamma t}$ can be approximated by step functions, we conclude that
\[\gamma\cdot (\gamma\cdot\one - A)^{-1} p\]
is a convex linear combination of elements
\[e^{t A} p\]
in the closed convex set $\pos(K)\cap V_i$, i.e.,
\[\gamma\cdot (\gamma\cdot\one - A)^{-1} (\pos(K)\cap V_i)\ \subseteq\ \pos(K)\cap V_i\]
for all $\gamma> \|A|_{V_i}\|_i$.
With $\varepsilon_i := \|A|_{V_i}\|_i^{-1}$ and hence
\[\lambda := \gamma^{-1}\in [0,\varepsilon_i)\]
we get the assertion (ii) for every $i\in\nset_0$.

(ii) $\Rightarrow$ (i):
Let $t\geq 0$ and $p\in\pos(K)\cap V_i$ for some $i\in\nset_0$.
Then, by \Cref{cor:taylor},
\[e^{tA} p = e^{t A|_{V_i}} p = \lim_{k\to\infty} \left( \one - \frac{t A|_{V_i}}{k} \right)^{-k} p \in \pos(K)\cap V_i\]
since
\[ (\one - \lambda A)^{-1} (\pos(K)\cap V_i)\ \subseteq\ \pos(K)\cap V_i\]
for all $k > t\cdot\varepsilon_i$ by assumption (ii).
The limit $k\to\infty$ of
\[\left( \one - \frac{t A|_{V_i}}{k}\right)^{-k} p\]
is in $\pos(K)\cap V_i$, since $\pos(K)$ is closed in the LF-topology of $\rset[x_1,\dots,x_n]$ and $V_i$ is closed since it is finite dimensional.

Since $p\in\pos(K)$ and $t\geq 0$ were arbitrary, (i) is proved.
\end{proof}

The resolvent description is a complete description.
But it is very abstract and infinitely many conditions have to be checked.
Additionally, for
\[ [0,\varepsilon_i)\]
the same restrictions as in \cite[Rem.\ 6.4]{didio25KPosPresGen} hold, i.e., $[0,\varepsilon_i)$ can in general not be extended and
\[\lim_{i\to\infty} \varepsilon_i = 0.\]

\section{Making Non-Negative Polynomials into Sums-of-Squares}
\label{sec:nonnegativeSoS}

We now come to making non-negative polynomials into sums of squares.
At first we move a set $S\subseteq\rset[x_1,\dots,x_n]$ to some cone $C\subseteq\rset[x_1,\dots,x_n]$.

\begin{thm}\label{thm:main}
Let $n\in\nset$, $d\in\nset_0$, $S\subseteq\rset[x_1,\dots,x_n]_{\leq d}$, and let $C\subseteq\rset[x_1,\dots,x_n]_{\leq d}$ be a full dimensional cone such that there is a linear functional
\[l:\rset[x_1,\dots,x_n]_{\leq d}\to\rset \quad\text{with}\quad l(p)>0 \quad\text{for all}\quad p\in \left(\overline{\cone S}\cup C\right)\setminus\{0\}.\]
If $f\in\inter C$ and the linear operator $A$ is defined by
\[A:\rset[x_1,\dots,x_n]_{\leq d}\to\rset[x_1,\dots,x_n]_{\leq d},\quad p\mapsto Ap := l(p)\cdot f,\]
then there exists a constant $\tau = \tau(f,C,S,l)>0$ such that
\[e^{tA}S \quad\subseteq\quad C\]
for all $t\geq\tau$.
\end{thm}
\begin{proof}
Let
\[\tilde{S} := \overline{\cone \conv S}\]
be the closed convex cone generated by $S$.
Since $l(p)>0$ for all $p\in \overline{S}$, $\tilde{S}$ is closed and pointed, i.e., $\tilde{S}$ has a compact base $B$.

Let $p\in B$ and $f\in\inter C$, i.e., $l(p)>0$ and $l(f)>0$.
Then
\[Ap = l(p)\cdot f \qquad\text{and}\qquad A^k p = l(p)\cdot l(f)^{k-1}\cdot f\]
for all $k\in\nset$.
Hence,
\begin{multline}\label{eq:etA}
e^{tA} p = \sum_{k\in\nset_0} \frac{t^k}{k!}\cdot A^k p\\
= p + l(p)\cdot\sum_{k\in\nset} \frac{t^k}{k!}\cdot l(f)^{k-1}\cdot f
= p + \frac{l(p)}{l(f)}\cdot \left(e^{t\cdot l(f)}-1\right)\cdot f
\end{multline}
and
\[\frac{l(f)}{l(p)}\cdot \left(e^{t\cdot l(f)}-1\right)^{-1}\cdot e^{tA}p \quad\xrightarrow{t\to\infty}\quad f.\]
Since
\[\frac{l(f)}{l(p)}\cdot \left(e^{t\cdot l(f)}-1\right)^{-1}\cdot e^{tA}p\]
is continuous in $t$ and $p$ and since $f$ is an inner point of $C$, there exists a neighborhood $U\subseteq C$ of $f$ and a constant $\tau\geq 0$ such that
\[e^{tA}S \quad\subseteq\quad e^{tA}\tilde{S} \quad\subseteq\quad \cone U \quad\subseteq\quad C\]
for all $t\geq\tau$.
\end{proof}

In the previous theorem it is clear that the main space $\rset[x_1,\dots,x_n]_{\leq d}$ can be replaced by any Banach space $\cV$.
The construction of the operator $A$ in the previous result is very specific.
This allows for the following \emph{memory} and \emph{complexity analysis}.

\begin{rem}[Memory and complexity analysis of \Cref{thm:main}.]\label{rem:complexity}
Set
\[N := \dim\rset[x_1,\dots,x_n]_{\leq d} = \binom{n+d}{n}.\]
By (\ref{eq:etA}), the transformation
\[T:\rset[x_1,\dots,x_n]_{\leq d}\to\rset[x_1,\dots,x_n]_{\leq d}\qquad\text{with}\qquad TS\subseteq C\]
from \Cref{thm:main} is given by
\[T := e^{\tau A} = e^{\tau\cdot f\cdot l} = \id + f\cdot \frac{1}{l(f)}\cdot (c-1)\cdot l\]
with
\[c := e	^{\tau\cdot l(f)}.\]
Hence, in any basis $\cB$ of $\rset[x_1,\dots,x_n]_{\leq d}$, the corresponding matrix
\[\tilde{T}\in\rset^{N\times N}\]
of $T$ can be saved by
\[\tilde{T} \overset{\wedge}{=} (\tilde{f},c,\tilde{l})\in\rset^{2N+1},\]
where $\tilde{f}\in\rset^N$ is the coefficient column vector of
\[l(f)^{-1}\cdot f\]
in the basis $\cB$ and $\tilde{l}\in\rset^N$ is the row vector representation of $l$ in the basis $\cB$, i.e., the memory requirement drops from $N^2$ to $2N+1$.

Let $M\in\rset^{N\times N}$ be an arbitrary matrix.
Then
\[M\tilde{T} = M + M\cdot \tilde{f}\cdot (c-1)\cdot \tilde{l}\]
requires at most
\[N^2 + N + N^2 = 2N^2 + N\]
multiplications and at most
\[1 + N\cdot (N-1) + N^2\]
additions, i.e., $M\tilde{T}$ \emph{requires at most $4N^2+1$ operations}!
Similarly,
\[\tilde{T}M\]
also \emph{requires at most $4N^2+1$ operations}.

To calculate $\tilde{T}^{-1}$, by (\ref{eq:etA}), we have
\[\left( e^{\tau A}\right)^{-1} = e^{-\tau A} = \id + f\cdot \frac{1}{l(f)}\cdot (c^{-1}-1)\cdot l,\]
i.e.,
\[\tilde{T}^{-1} \overset{\wedge}{=} (\tilde{f},c^{-1},\tilde{l})\in\rset^{2N+1}.\]
Hence, the calculation of $\tilde{T}^{-1}\in\rset^{N\times N}$ requires \emph{one (!) operation}, a division.
Also,
\[M\tilde{T}^{-1} \qquad\text{and}\qquad \tilde{T}^{-1}M\]
require at most $4N^2+1$ operations.

For the transformations
\[Tp\]
of a polynomial $p\in\rset[x_1,\dots,x_n]_{\leq d}$, we find from
\[Tp = p + f\cdot \frac{1}{l(f)}\cdot (c-1)\cdot l(p)\]
that the coefficient vector $\tilde{p}$ of $p$ transforms by
\[\tilde{T}\tilde{p} = \tilde{p} + \tilde{f}\cdot (c-1)\cdot \tilde{l}\tilde{p}.\]
This requires at most
\[N+1+N=2N+1\]
multiplications and at most
\[N-1 + 1 + N = 2N\]
additions, i.e., in total at most $4N+1$ operations.
Similarly,
\[T^{-1}p\]
requires at most $4N+1$ operations or $4N+2$, if the one operation for the inversion $T^{-1}$ of $T$ is included.
\exmsymbol
\end{rem}

We now apply \Cref{thm:main} to non-negative polynomials and sums of squares.

\begin{cor}\label{cor:makingSoS}
Let $n\in\nset$, $d\in\nset_0$, $K\subseteq\rset^n$ be closed with $\inter K\neq \emptyset$, and let $S\subseteq\pos(K)_{\leq 2d}$.
Then the following statements hold:
\begin{enumerate}[(i)]
\item There exists a linear operator
\[A:\rset[x_1,\dots,x_n]_{\leq 2d}\to\rset[x_1,\dots,x_n]_{\leq 2d}\]
and a constant $\tau\geq 0$ such that
\[e^{tA}S \quad\subseteq\quad \sum\rset[x_1,\dots,x_n]_{\leq d}^2\]
for all $t\geq\tau$.

\item Let
\[f\in\inter \sum\rset[x_1,\dots,x_n]_{\leq d}^2\]
and let
\[l:\rset[x_1,\dots,x_n]_{\leq 2d}\to\rset,\quad l(p):= \int_K p(x)\cdot e^{-x^2}~\diff x.\]
Then
\[A:\rset[x_1,\dots,x_n]_{\leq 2d}\to\rset[x_1,\dots,x_n]_{\leq 2d}, \quad p\mapsto Ap := l(p)\cdot f\]
is an operator as in (i).

\item Let
\[L:\rset[x_1,\dots,x_n]_{\leq 2d}\to\rset\]
be a linear functional and let $A$ be an operator as in (i).
Set
\[\tilde{S} := e^{\tau A}S\subseteq\sum\rset[x_1,\dots,x_n]_{\leq d}^2\]
and
\[\tilde{L}:\rset[x_1,\dots,x_n]_{\leq 2d}\to\rset,\quad p\mapsto \tilde{L}(p) := L(e^{-\tau A}p).\]
Then
\[\inf_{p\in S} L(p) = \inf_{p\in \tilde{S}} \tilde{L}(p) \qquad\text{and}\qquad \sup_{p\in S} L(p) = \sup_{p\in \tilde{S}} \tilde{L}(p).\]
\end{enumerate}
\end{cor}
\begin{proof}
(i) + (ii): Since $C=\sum\rset[x_1,\dots,x_n]_{\leq d}^2$ is a full dimensional cone and $l(p)>0$ for all $p\in (S\cup C)\setminus\{0\}$, \Cref{thm:main} gives the assertion.

(iii): Follows from the identity $e^{-\tau A} e^{\tau A} = \one$.
\end{proof}

\begin{rem}
An equivalent results as in \Cref{cor:makingSoS} for the sums of squares cone also holds for the Waring cone by \Cref{thm:main}.
\exmsymbol
\end{rem}

Since \Cref{cor:makingSoS} contains the degree bound $\leq 2d$, it can also be interpreted to work on the homogeneous polynomials:
\[\rset[x_0,x_1,\dots,x_n]_{=2d}\quad \cong\quad \rset[x_1,\dots,x_n]_{\leq 2d}.\]

Note, the construction of the operator $A$ in \Cref{thm:main} and \Cref{cor:makingSoS} is \emph{very specific}, since $A$ is made in such a way that $f\cdot [0,\infty)$ becomes an accumulation ray, i.e., with increasing time $t$ every $e^{tA}p$ converges towards the ray spanned by $f$.
By \Cref{rem:complexity}, we get that the transformations
\[MT,\qquad TM,\qquad MT^{-1},\qquad \text{and}\qquad T^{-1}M\]
require at most $4N^2+1$ operations and
\[Tp\qquad \text{and}\qquad T^{-1}p\]
require at most $4N+1$ operations with
\[N:=\dim\rset[x_1,\dots,x_n]_{\leq d} = \binom{n+d}{n}\]
in \Cref{thm:main} and
\[N:=\dim\rset[x_1,\dots,x_n]_{\leq 2d} = \binom{n+2d}{n}\]
in \Cref{cor:makingSoS} to make non-negative polynomials in sums-of-squares.
This requires the degree bounds $\leq d$ and $\leq 2d$.

\Cref{thm:main} does not hold for compact $K\subseteq\rset^n$ without the degree bounds.
The following \Cref{thm:main4} shows that, for compact $K\subseteq\rset^n$, there exists no linear operator
\[A:\rset[x_1,\dots,x_n]\to\rset[x_1,\dots,x_n]\]
such that
\[e^A\pos(K)\ \subseteq\ \sum\rset[x_1,\dots,x_n]^2.\]
\Cref{thm:main4} is even stronger: There exists no \emph{injective linear} operator
\[T:\rset[x_1,\dots,x_n]\to\rset[x_1,\dots,x_n]\]
with
\[T\pos(K)\ \subseteq\ \sum\rset[x_1,\dots,x_n]^2.\]

\begin{thm}\label{thm:main4}
Let $n\in\nset$ and let $K\subseteq\rset^n$ be compact.
There exists no injective and linear operator
\[T:\rset[x_1,\dots,x_n]\to\rset[x_1,\dots,x_n]\]
such that
\[T\pos(K)\ \subseteq\ \sum\rset[x_1,\dots,x_n]^2.\]
\end{thm}
\begin{proof}
Assume there exists an injective and linear operator
\[T:\rset[x_1,\dots,x_n]\to\rset[x_1,\dots,x_n] \quad\text{with}\quad T\pos(K)\subseteq\sum\rset[x_1,\dots,x_n]^2.\]
Since $T$ is injective, there exist $p\in\pos(K)$ and $c>0$ with
\[c-p\in\pos(K)\qquad \text{and}\qquad \deg Tp\ >\ \deg T1.\]
Then
\[Tc = T(p + c-p) = Tp + T(c-p)\]
with
\[Tc,\ Tp,\ T(c-p) \quad\in\sum\rset[x_1,\dots,x_n]^2\]
implies
\[\deg Tc\ =\ \max\{\deg Tp,\ \deg T(c-p)\}\ \geq\ \deg Tp\ >\ \deg Tc\]
which is a contradiction.
\end{proof}

\section{An Example of a Non-Markov Evolution}
\label{sec:nonMarkov}

We discussed and treated so far semi-groups
\[\left( e^{tA} \right)_{t\geq 0}\]
with $A\in\fg$, i.e., these are homogeneous Markov processes on the polynomials.
Such processes appear e.g.\ in \cite{cuchie12,filipov16,filipov20}.
In this section we want to give a process which is not a homogeneous Markov process.
Its time evolution comes from the non-linear Burgers' equation
\begin{equation}\label{eq:burger}
\partial_t u(x,t) = u(x,t)\cdot \partial_x u(x,t).
\end{equation}
If the initial value $u_0$ fulfills
\[\supp u_0 \subseteq [a,b],\]
then also
\begin{equation}\label{eq:suppab}
\supp u(\,\cdot\,,t) \subseteq [a,b]
\end{equation}
for all times $t$ such that the classical solution exists.
Burgers' equation is a simple example to observe a finite break down in a classical solution of a partial differential equation.
For more on Burgers' equation see e.g.\ \cite{kreiss89} and \cite{evans10}.

Instead of working with operators on polynomials, for simplicity we will now work in the dual setting, i.e., operators on sequences, see \Cref{subsec:top}.
For a time-evolution
\[T_t:\cset[x_1,\dots,x_n]\to\cset[x_1,\dots,x_n]\]
we get the dual time-evolution
\[S_t := T_t^* :\cset^{\nset_0^n}\to\cset^{\nset_0^n}\]
on sequences.
Here neither $T_t$ nor $S_t$ need to be of the form
\[e^{tA} \qquad\text{nor}\qquad e^{tA^*},\]
i.e., neither $T_t$ nor $S_t$ are a homogeneous Markov process.

For Burgers' equation we have the following time-evolution of sequences.

\begin{prop}\label{lem:burgerMoments}
Let $u_0\in\cS(\rset)$.
Then, for all $p\in\nset$ and $k\in\nset_0$, the time-dependent (signed) moments
\[s_{k,p}(t) := \int_\rset x^k\cdot u(x,t)^p~\diff x\]
of the solution $u(x,t)$ of Burgers' equation (\ref{eq:burger}) are
\[s_{k,p}(t) = \sum_{i=0}^{k} \frac{s_{k-i,p+i}(0)}{i!}\cdot t^i\cdot \prod_{j=0}^{i-1} \frac{(p+j)\cdot (k-j)}{1+(p+j)^2} \quad\in\rset[t].\]
\end{prop}
\begin{proof}
For $k=0$, we have
\begin{align*}
\partial_t s_{0,p}(t) &= \partial_t \int_\rset u(x,t)^p~\diff x\\
&= -p \int_\rset u(x,t)^p \cdot \partial_x u(x,t)~\diff x.
\intertext{Since $u(\,\cdot\,,t)$ is a Schwartz function, by partial integration,}
&= p \int_\rset \partial_x [u(x,t)^p] \cdot u(x,t)~\diff x\\
&= p^2 \int_\rset u(x,t)^p\cdot \partial_x u(x,t)~\diff x\\
&= -p\cdot \partial_t s_{0,p}(t)
\end{align*}
which gives
\[\partial_t s_{0,p}(t) = 0\]
and therefore
\[s_{0,p}(t) = s_{0,p}(0).\]

For $k\geq 1$, we get the induction
\begin{align*}
\partial_t s_{k,p}(t) &= \partial_t \int_\rset x^k\cdot u(x,t)^p~\diff x\\
&= -p \int_\rset x^k\cdot u(x,t)^p\cdot \partial_x u(x,t)~\diff x\\
&= p \int_\rset \partial_x ( x^k\cdot u(x,t)^p)\cdot u(x,t)~\diff x\\
&= p\cdot k \int_\rset x^{k-1}\cdot u(x,t)^{p+1}~\diff x + p^2 \int_\rset x^k\cdot u(x,t)^p\cdot \partial_x u(x,t) ~\diff x\\
&= p\cdot k\cdot s_{k-1,p+1}(t) - p^2\cdot\partial_t s_{k,p}(t)\\
&= \frac{p\cdot k}{1+p^2}\cdot s_{k-1,p+1}(t).
\end{align*}
Solving this induction gives
\begin{align*}
s_{k,p}(t) &= s_{k,p}(0) + \frac{p\cdot k}{1+p^2} \int_0^t s_{k-1,p+1}(\tau_1)~\diff\tau_1\\
&= s_{k,p}(0) + \frac{p\cdot k}{1+p^2} \int_0^t \Bigg[ s_{k-1,p+1}(0)\\
&\qquad\qquad\qquad\qquad + \frac{(p+1)(k-1)}{1+(p+1)^2} \int_0^{\tau_1} s_{k-2,p+2}(\tau_2)~\diff\tau_2\Bigg]\diff\tau_1\\
&\ \,\vdots\\
&= \sum_{i=0}^{k} \frac{s_{k-i,p+i}(0)}{i!}\cdot t^i\cdot \prod_{j=0}^{i-1} \frac{(p+j)\cdot (k-j)}{1+(p+j)^2},
\end{align*}
which proves the statement.
\end{proof}

The (signed) moments $s_{p,t}$ of the (signed) measure(s)
\[\diff\mu_p(x) := u(x,t)^p~\diff x\]
with $p\in\nset$ are therefore polynomials in $t$ and the coefficients depend only on the initial value $u_0$.
Since for Burgers' equation
\[u_0\geq 0\]
implies
\[u(\,\cdot\,,t)\geq 0,\]
as long as the classical solution exists, the $s_{p,k}$ are in fact moments of the non-negative Borel measure(s) $\mu_p$ at least as long as the classical solution exists.
Hence, by (\ref{eq:suppab}), a $[a,b]$-moment sequence represented by an absolutely continuous representing measure with a Schwartz function density remains under Burgers' equation a $[a,b]$-moment sequence at least as long as the classical solution exists.

The following calculation shows that a finite break down as known from partial differential equations can here also be observed through the moments.

\begin{exm}\label{exm:momFinBreakDown}
For $p=1$, we have the following three explicit time-dependent moments from \Cref{lem:burgerMoments}:
\begin{align*}
\int_\rset u(x,t)~\diff x\ =\ s_{0,1}(t)\ &=\ s_{0,1}(0),\\
\int_\rset x\cdot u(x,t)~\diff x\ =\ s_{1,1}(t)\ &=\ s_{1,1}(0) + s_{0,2}(0)\cdot t,\\
\int_\rset x^2\cdot u(x,t)~\diff x\ =\ s_{2,1}(t)\ &=\ s_{2,1}(0) + s_{1,2}(0)\cdot t + \frac{2 s_{0,3}(0)}{5}\cdot t^2.
\end{align*}
For the function
\[u_0(x) := \begin{cases} 1+x & \text{for}\ x\in [-1,0],\\ 1-x & \text{for}\ x\in [0,1],\\ 0 & \text{else}
\end{cases}\]
we have
\[s_{0,1}(0) = 1,\ s_{1,1}(0)=0,\ s_{0,2}(0) = \frac{2}{3},\ s_{2,1}(0) = \frac{1}{6},\ s_{1,2}(0) = 0,\ s_{0,3} = \frac{1}{2}\]
and therefore
\begin{equation}\label{eq:negative}
L_{s(t)}((x-t)^2) = \frac{1}{6} - \frac{2}{15}t^2 \quad\xrightarrow{t\to \pm\infty}\quad -\infty.
\end{equation}
But we have $u_0\not\in\cS(\rset)$.
By using a mollifier $S_\varepsilon$, we get
\[u_0^\varepsilon := S_\varepsilon * u_0 \in C_c^\infty(\rset)\subset \cS(\rset)\]
for any $\varepsilon > 0$ and we can chose, by continuity of the $s_{p,k}(0)$ on $\varepsilon$, an $\varepsilon>0$ small ($\varepsilon\ll 1$) such that the coefficient of $t^2$ in (\ref{eq:negative}) remains negative.
Hence, the time-dependent Riesz functional
\[L_{s(t^*)}\]
evaluated at the time-dependent non-negative polynomial
\[(x-t^*)^2\]
for some $t^*$ with $|t^*|\gg 0$ is negative.
By Haviland's Theorem \cite{havila35,havila36}, it is therefore not a moment functional.
In finite time the solution
$u$
of Burgers' equation is no longer non-negative and hence broke down in finite time.\exmsymbol
\end{exm}

Note, in the previous example the break down time observed with the moments might me larger than the break down time of the classical solution.
The polynomial
\[(x-t^*)^2\]
might not be the optimal choice for finding the break down time.
But it shows the existence of a finite break down.
Additionally, after the break down of the classical solution, a measure valued solution might exists and then stops being non-negative.
These effects and observations in non-Markov processes, especially from non-linear partial differential equations, have to be studied further.

\section*{Acknowledgments}

The author thanks Markus Schweighofer for introducing him to the concept of positivity preserver and especially giving him the reference \cite{borcea11}.
The author thanks Jan Stochel and Zenon Jablonski for reading \Cref{sec:stochel} and for the discussion at the LAW25 conference, where the author was invited by Alja\v{z} Zalar to present the research \cite{didio24posPresConst,didio25KPosPresGen,didio25hadamardLanger} and the results in the present work.

\section*{Funding}

The author is supported by the Deutsche Forschungs\-gemein\-schaft DFG with the grant DI-2780/2-1 and his research fellowship at the Zukunfts\-kolleg of the University of Konstanz, funded as part of the Excellence Strategy of the German Federal and State Government.

\section*{Conflict of Interest Statement}

The author declares no conflict of interest.

\section*{Data Availability Statement}

There is no associated data.


\newcommand{\etalchar}[1]{$^{#1}$}
\providecommand{\bysame}{\leavevmode\hbox to3em{\hrulefill}\thinspace}
\providecommand{\MR}{\relax\ifhmode\unskip\space\fi MR }
\providecommand{\MRhref}[2]{%
  \href{http://www.ams.org/mathscinet-getitem?mr=#1}{#2}
}
\providecommand{\href}[2]{#2}

\end{document}